\begin{document}

\baselineskip = 16pt

\newcommand \ZZ {{\mathbb Z}}
\newcommand \NN {{\mathbb N}}
\newcommand \RR {{\mathbb R}}
\newcommand \PR {{\mathbb P}}
\newcommand \AF {{\mathbb A}}
\newcommand \GG {{\mathbb G}}
\newcommand \QQ {{\mathbb Q}}
\newcommand \CC {{\mathbb C}}
\newcommand \bcA {{\mathscr A}}
\newcommand \bcC {{\mathscr C}}
\newcommand \bcD {{\mathscr D}}
\newcommand \bcF {{\mathscr F}}
\newcommand \bcG {{\mathscr G}}
\newcommand \bcH {{\mathscr H}}
\newcommand \bcM {{\mathscr M}}
\newcommand \bcI {{\mathscr I}}
\newcommand \bcJ {{\mathscr J}}
\newcommand \bcK {{\mathscr K}}
\newcommand \bcL {{\mathscr L}}
\newcommand \bcO {{\mathscr O}}
\newcommand \bcP {{\mathscr P}}
\newcommand \bcQ {{\mathscr Q}}
\newcommand \bcR {{\mathscr R}}
\newcommand \bcS {{\mathscr S}}
\newcommand \bcT {{\mathscr T}}
\newcommand \bcV {{\mathscr V}}
\newcommand \bcU {{\mathscr U}}
\newcommand \bcW {{\mathscr W}}
\newcommand \bcX {{\mathscr X}}
\newcommand \bcY {{\mathscr Y}}
\newcommand \bcZ {{\mathscr Z}}
\newcommand \goa {{\mathfrak a}}
\newcommand \gob {{\mathfrak b}}
\newcommand \goc {{\mathfrak c}}
\newcommand \gom {{\mathfrak m}}
\newcommand \gon {{\mathfrak n}}
\newcommand \gop {{\mathfrak p}}
\newcommand \goq {{\mathfrak q}}
\newcommand \goQ {{\mathfrak Q}}
\newcommand \goP {{\mathfrak P}}
\newcommand \goM {{\mathfrak M}}
\newcommand \goN {{\mathfrak N}}
\newcommand \uno {{\mathbbm 1}}
\newcommand \Le {{\mathbbm L}}
\newcommand \Spec {{\rm {Spec}}}
\newcommand \Gr {{\rm {Gr}}}
\newcommand \Pic {{\rm {Pic}}}
\newcommand \Jac {{{J}}}
\newcommand \Alb {{\rm {Alb}}}
\newcommand \alb {{\rm {alb}}}
\newcommand \Corr {{Corr}}
\newcommand \Chow {{\mathscr C}}
\newcommand \Sym {{\rm {Sym}}}
\newcommand \Prym {{\rm {Prym}}}
\newcommand \cha {{\rm {char}}}
\newcommand \eff {{\rm {eff}}}
\newcommand \tr {{\rm {tr}}}
\newcommand \Tr {{\rm {Tr}}}
\newcommand \pr {{\rm {pr}}}
\newcommand \ev {{\it {ev}}}
\newcommand \cl {{\rm {cl}}}
\newcommand \interior {{\rm {Int}}}
\newcommand \sep {{\rm {sep}}}
\newcommand \td {{\rm {tdeg}}}
\newcommand \alg {{\rm {alg}}}
\newcommand \im {{\rm im}}
\newcommand \gr {{\rm {gr}}}
\newcommand \op {{\rm op}}
\newcommand \Hom {{\rm Hom}}
\newcommand \Hilb {{\rm Hilb}}
\newcommand \Sch {{\mathscr S\! }{\it ch}}
\newcommand \cHilb {{\mathscr H\! }{\it ilb}}
\newcommand \cHom {{\mathscr H\! }{\it om}}
\newcommand \colim {{{\rm colim}\, }}
\newcommand \End {{\rm {End}}}
\newcommand \coker {{\rm {coker}}}
\newcommand \id {{\rm {id}}}
\newcommand \van {{\rm {van}}}
\newcommand \spc {{\rm {sp}}}
\newcommand \Ob {{\rm Ob}}
\newcommand \Aut {{\rm Aut}}
\newcommand \cor {{\rm {cor}}}
\newcommand \Cor {{\it {Corr}}}
\newcommand \res {{\rm {res}}}
\newcommand \red {{\rm{red}}}
\newcommand \Gal {{\rm {Gal}}}
\newcommand \PGL {{\rm {PGL}}}
\newcommand \Bl {{\rm {Bl}}}
\newcommand \Sing {{\rm {Sing}}}
\newcommand \spn {{\rm {span}}}
\newcommand \Nm {{\rm {Nm}}}
\newcommand \inv {{\rm {inv}}}
\newcommand \codim {{\rm {codim}}}
\newcommand \Div{{\rm{Div}}}
\newcommand \CH{{\rm{CH}}}
\newcommand \sg {{\Sigma }}
\newcommand \DM {{\sf DM}}
\newcommand \Gm {{{\mathbb G}_{\rm m}}}
\newcommand \tame {\rm {tame }}
\newcommand \znak {{\natural }}
\newcommand \lra {\longrightarrow}
\newcommand \hra {\hookrightarrow}
\newcommand \rra {\rightrightarrows}
\newcommand \ord {{\rm {ord}}}
\newcommand \Rat {{\mathscr Rat}}
\newcommand \rd {{\rm {red}}}
\newcommand \bSpec {{\bf {Spec}}}
\newcommand \Proj {{\rm {Proj}}}
\newcommand \pdiv {{\rm {div}}}
\newcommand \wt {\widetilde }
\newcommand \ac {\acute }
\newcommand \ch {\check }
\newcommand \ol {\overline }
\newcommand \Th {\Theta}
\newcommand \cAb {{\mathscr A\! }{\it b}}

\newenvironment{pf}{\par\noindent{\em Proof}.}{\hfill\framebox(6,6)
\par\medskip}

\newtheorem{theorem}[subsection]{Theorem}
\newtheorem{proposition}[subsection]{Proposition}
\newtheorem{lemma}[subsection]{Lemma}
\newtheorem{corollary}[subsection]{Corollary}

\theoremstyle{definition}
\newtheorem{definition}[subsection]{Definition}

\title[On a question of Colliot-Th\'{e}l\`{e}ne on Chow group mod $n$]{On a question of Colliot-Th\'{e}l\`{e}ne on Chow group mod $n$}
\author{Kalyan Banerjee, Kalyan Chakraborty}

\begin{abstract}
In this note we  define the notion of Tate-Shafarevich group and Selmer group of the Chow group of an abelian variety defined over a number field. In this context we give positive answer to the question of Colliot-Th\'{e}l\`{e}ne  that the Chow group of zero cycles modulo $n$ is finite over any number field.
\end{abstract}
\maketitle

\section{Introduction}
The notion of Selmer group and the Tate-Shfarevich group are important from the perspective of local-global principle in arithmetic geometry. The studies of these groups have been initiated by Cassels, Lang, Selmer, Shafarevich, Tate \cite{CAS1},\cite{CAS2},\cite{LT},\cite{Sel}, \cite{Sh1}, \cite{T}. The famous conjecture about the Tate-Shafarevich group tells that this group associated to an abelian variety is finite.  The first case where it has been proven is the case of elliptic curves with complex multiplication having rank atmost 1, by Karl Rubin, \cite{Ru1}. The next is the case of modular elliptic curves with analytic rank atmost 1, by V.Kolyvagin, \cite{Kol}. The paper by Selmer \cite{Sel} has many examples of genus one curves for which the Tate-Shafarevich group has non-trivial elements. The perception of the Tate-Shafarevich group of abelian varieties comes from the first Galois cohomology of the Abelian variety defined over a number field. Alternatively it can be described as the non-trivial torsors on the abelian variety which become trivial over a local field. On  the other hand, Selmer group has been defined by a certain kernel at the level of first Galois cohomology and it is known that this group is finite.

Our aim of this paper is to study the notion of Selmer group and the Tate-Shafarevich group from the perspective of algebraic cycles. That is we consider the Galois action of the absolute Galois group of a number field on the group of degree zero cycles on an abelian variety defined over a number field. Then consider the Selmer and the Tate-Shafarevich group associated to this group of degree zero cycles on the abelian variety by considering the kernel at the level of first Galois cohomology of this particular Galois module.

There are certain things known about the group of degree zero cycles on a smooth projective variety over the algebraic closure of a number field. One is the Mumford-Roitman, argument \cite{M},\cite{R1} about Chow schemes which says that the natural map from the symmetric power of an abelian variety to the Chow group has fibers given by a countable union of Zariski closed subsets in the symmetric power. This result enables us to give a scheme theoretic structure on the first Galois cohomology of the group of degree zero cycles on the abelian variety. Next, there is the famous theorem due to Roitman, \cite{R2}, which says that the torsion subgroup of the group of degree zero cycles on an abelian variety and the torsion subgroup on the abelian variety are isomorphic. This leads us to study divisibility of the group of the degree zero cycles defined over the number field from a cohomological prespective.

The question whether Chow group modulo a positive integer is finite or not is of importance and it is a question due to Colliot-Th\'{e}l\`{e}ne that whether Chow group modulo $n$ is always finite, for any separably closed field extension $K$ of $\QQ$. This question appeared in the paper by Chad Schoen \cite{Sc}, where he gave an example of a variety on which the Chow group of codimension two cycles modulo $n$ is not finite. In our paper we provide a positive answer to the question of Colliot-Th\'{e}l\`{e}ne when we consider zero cycles on an abelian variety over a number field. This assumption is much weaker than that has been assumed in the paper by Schoen. In this direction, other works are due to Totaro \cite{To} and Saito-Sato \cite{SS}.

We consider a new Galois theoretic invariant namely the Selmer group  of the Chow group of zero cycles on  the abelian variety as discussed above. We prove the finiteness of the Selmer group associated to the group of degree zero cycles on an abelian variety modulo rational equivalence. The main result of the paper is  the following:

\smallskip

\begin{theorem}
The Selmer group associated to the Chow group of degree zero cycles on an abelian variety over a number field is finite.
\end{theorem}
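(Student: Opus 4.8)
The plan is to reduce the assertion to the classical finiteness of the $n$-Selmer group of an abelian variety, the reduction being powered by Roitman's theorem. Fix the number field $k$, write $G_k=\Gal(\ol k/k)$, let $A/k$ be the abelian variety, and let $M=\CH_0(A_{\ol k})_{\deg 0}$ denote the group of zero-cycles of degree $0$ on $A_{\ol k}$ modulo rational equivalence. Since every cycle and every rational equivalence is already defined over a finite subextension, $M=\varinjlim_{k'/k}\CH_0(A_{k'})_{\deg 0}$ is a discrete $G_k$-module; similarly, for each place $v$ one has the discrete $G_{k_v}$-module $M_v=\CH_0(A_{\ol{k_v}})_{\deg 0}$, together with base-change maps $M\to M_v$ compatible with $G_{k_v}\to G_k$. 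The Albanese morphism, defined over $k$, yields over $\ol k$ and over each $\ol{k_v}$ short exact sequences of Galois modules
\[
0\lra T\lra M\xrightarrow{\alb}A(\ol k)\lra 0,\qquad 0\lra T_v\lra M_v\xrightarrow{\alb}A(\ol{k_v})\lra 0,
\]
with $T$, $T_v$ the respective Albanese kernels.

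The decisive step is to observe that $T$, and each $T_v$, is a $\QQ$-vector space. Indeed $M$ is divisible (a standard fact: the degree-zero part of the group of zero-cycles on a smooth projective variety over an algebraically closed field is divisible, since any degree-zero cycle is supported on a smooth curve, where it becomes divisible in the Jacobian), and by Roitman's theorem \cite{R2} the Albanese map induces an isomorphism $M_{\mathrm{tors}}\xrightarrow{\sim}A(\ol k)_{\mathrm{tors}}$; together these force $T$ to be torsion-free and divisible, hence uniquely divisible. Consequently $H^i(H,T)=0$ for all $i\geq1$ and all closed subgroups $H\subseteq G_k$: writing $T=\varinjlim_U T^U$ over the open normal subgroups $U\subseteq H$, each $H^i(H/U,T^U)$ vanishes for $i\geq1$ because $|H/U|$ is invertible on the $\QQ$-vector space $T^U$, whence the colimit vanishes; likewise for $T_v$ and $G_{k_v}$, including $G_\RR=\ZZ/2$. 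Feeding this into the long exact cohomology sequences of the displayed sequences and of their multiplication-by-$n$ snake sequences, we obtain canonical $G_k$- (resp.\ $G_{k_v}$-) equivariant identifications $M[n]\cong A[n]\cong M_v[n]$ together with canonical isomorphisms
\[
H^1(k,M)\xrightarrow{\sim}H^1(k,A),\qquad H^1(k_v,M_v)\xrightarrow{\sim}H^1(k_v,A)\quad\text{for every }v,
\]
all induced by $\alb$ and by Roitman's isomorphism, hence mutually compatible and compatible with localization.

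Now recall that, via the Kummer-type sequence $0\to M[n]\to M\xrightarrow{n}M\to0$ (legitimate because $M$ is divisible) and its local analogues, the Selmer group under consideration is
\[
\mathrm{Sel}^n\bigl(\CH_0(A)/k\bigr):=\ker\Bigl(H^1(k,M[n])\lra\textstyle\prod_v H^1(k_v,M_v)\Bigr),
\]
the local condition at $v$ being that the class come from $M_v^{G_{k_v}}/n$, equivalently that it lie in the kernel of $H^1(k_v,M_v[n])\to H^1(k_v,M_v)$. The commutative ladder relating the Kummer sequence of $M_v$ (or of $M$) to that of $A$ through $\alb$ and Roitman's isomorphism identifies these local conditions with the classical ones --- in both cases the condition at $v$ reads ``lies in the kernel of $H^1(k_v,A[n])\to H^1(k_v,A)$'' --- so transporting the kernel through the identifications of the previous paragraph gives
\[
\mathrm{Sel}^n\bigl(\CH_0(A)/k\bigr)\ \cong\ \ker\Bigl(H^1(k,A[n])\lra\textstyle\prod_v H^1(k_v,A)\Bigr)=\mathrm{Sel}^n(A/k).
\]
Since $\mathrm{Sel}^n(A/k)$ is finite --- it is contained in the subgroup of $H^1(k,A[n])$ unramified outside the finite set of places dividing $n\infty$ or of bad reduction for $A$, and that group is finite by classical descent, cf.\ \cite{LT} --- the theorem follows. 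The scheme-theoretic structure on $H^1(k,M[n])$ afforded by the Mumford--Roitman description of the fibres of $\Sym^d A_{\ol k}\to\CH_0(A_{\ol k})$ plays no role in the finiteness, which is a statement about the underlying abstract group.

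The crux is the first step: it is Roitman's theorem, through torsion-freeness of the Albanese kernel, that turns $T$ into a $\QQ$-vector space and thereby annihilates all of its positive-degree Galois cohomology, collapsing the entire problem onto $A[n]$; everything else is bookkeeping with long exact sequences and classical descent theory. The only remaining point requiring a little care is that the two families of local conditions agree at the archimedean places and at the primes dividing $n$, which is again immediate from the cohomological vanishing for $T_v$.
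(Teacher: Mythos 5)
Your proof is correct, but it takes a genuinely different route from the paper's. The paper's argument stays entirely at the level of the $n$-torsion: it uses Roitman's theorem only to identify $H^1(G,A_0(A)[n])$ with $H^1(G,A[n])$, and then shows directly that a Selmer class, viewed in $H^1(G,A[n])$, is unramified outside the finite set of places dividing $n$ or of bad reduction --- this is done by a specialization argument (reduction of cycles modulo $v$, injectivity of $n$-torsion under good reduction) applied to the local kernels $\ker\bigl(H^1(G_v,A_v)[n]\to H^1(G_v,A_0(A_v))[n]\bigr)$ --- and concludes with the classical finiteness of $H^1(G,M;S)$ for a finite module $M$. You instead exploit the Albanese exact sequence $0\to T\to M\to A(\ol k)\to 0$ and the observation that $T$ is uniquely divisible (torsion-freeness from Roitman, divisibility from divisibility of the degree-zero Chow group), which annihilates all positive-degree Galois cohomology of $T$ and its local analogues; this collapses the cycle-theoretic Selmer group onto the classical $n$-Selmer group of $A$ on the nose, and finiteness is then a quotation. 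Your approach buys more: it produces an actual isomorphism $S^n(A_0(A)/K)\cong \mathrm{Sel}^n(A/K)$ rather than merely an embedding into a finite group, it bypasses the delicate reduction-mod-$v$ discussion entirely, and it makes transparent that the only inputs are Roitman's theorem and classical descent. The paper's approach, by contrast, is self-contained at the level of cocycles and closer in spirit to the standard proof of finiteness of $\mathrm{Sel}^n(A/K)$, at the cost of having to check unramifiedness place by place. The two small points you should make fully explicit are (i) that Roitman's torsion theorem applies over $\ol{K_v}$ (an algebraically closed field of characteristic zero, so this is fine), giving the $G_v$-equivariant identification $M_v[n]\cong A[n]$, and (ii) that the injectivity of $H^1(k_v,M_v)\to H^1(k_v,A)$ (from $H^1(k_v,T_v)=0$) is what guarantees that the two families of local conditions literally coincide; you assert both, and both are correct.
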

As a corollary we prove the following result:
\begin{corollary}
Let $A$ denote an abelian variety defined over a number field $K$. Let $A_0(A)$ denote the group of degree zero cycles on the abelian variety. Let $A_0(A)(K)$ denote the group of Galois-fixed points under the action of the absolute Galois group $G$. Then the group $A_0(A)(K)/nA_0(A)(K)$ is finite.
\end{corollary}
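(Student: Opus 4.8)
The plan is to run the classical $n$-descent argument---the one behind the weak Mordell--Weil theorem---with the abelian variety $A(\bar K)$ replaced throughout by the Galois module $M:=A_0(A)$ of degree zero cycles over $\bar K$, and then to reduce the Selmer group so obtained to the ordinary $n$-Selmer group of $A$ via Roitman's theorem. Write $G=\Gal(\bar K/K)$; for a place $v$ of $K$ fix an embedding $\bar K\hra\bar K_v$, with decomposition group $G_v\subseteq G$, and set $M_v:=A_0(A_{\bar K_v})$. Two structural facts about $M$ make the descent formalism available. First, $M$ is a \emph{discrete} $G$-module, since every zero cycle and every rational equivalence is defined over a finite extension of $K$; thus $M=\colim_L A_0(A_L)$, and likewise over each $K_v$. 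Second, $M$ and each $M_v$ is \emph{divisible}: the Chow group of degree zero cycles of a smooth projective variety over an algebraically closed field is divisible (reduce to a curve through the support of a given cycle, and use that the Jacobian of a curve over an algebraically closed field is divisible). Consequently the Kummer-type sequence $0\to M[n]\to M\xrightarrow{\ n\ }M\to 0$ of discrete $G$-modules is exact; I would then define $\mathrm{Sel}_n(A_0(A)/K)$ to be the subgroup of $H^1(G,M[n])$ of classes dying in $H^1(G_v,M_v)$ for every place $v$, and the associated long exact sequence gives a short exact sequence
\[
0\lra M^G/nM^G\lra \mathrm{Sel}_n(A_0(A)/K)\lra B\lra 0,
\]
with $B$ the $n$-torsion of the Tate--Shafarevich group of $A_0(A)$; since $M^G=A_0(A)(K)$, the Corollary will be a formal consequence of the Theorem. (The Mumford--Roitman description \cite{M,R1} of the fibres of $\Sym^d A\to\CH_0(A)$ as countable unions of Zariski closed subsets is the natural tool for making this formalism, and the scheme-theoretic structure on $H^1(G,M)$, precise for the module $M$, which is not finitely generated.)

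The heart of the matter is the identification of the Galois module $M[n]$. By Roitman's theorem \cite{R2} the Albanese (summation) map $s\colon M\to A(\bar K)$ carries the torsion of $M$ isomorphically onto the torsion of $A(\bar K)$; since $s$ is visibly $G$-equivariant and we are in characteristic zero, this yields an isomorphism of $G$-modules $s\colon M[n]\xrightarrow{\ \sim\ }A[n]$, and similarly $G_v$-isomorphisms $M_v[n]\cong A[n]$ compatible with restriction from $G$ and with the pullbacks $M\to M_v$. Hence $s$ induces an isomorphism $s_*\colon H^1(G,M[n])\xrightarrow{\ \sim\ }H^1(G,A[n])$ and, for each $v$, a commutative square whose vertical arrows are $s_*$ and the map $H^1(G_v,M_v)\to H^1(G_v,A(\bar K_v))$ induced by $s$, and whose horizontal arrows are the two localization maps. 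If $c\in\mathrm{Sel}_n(A_0(A)/K)$ then the image of $c$ in $H^1(G_v,M_v)$ is zero for all $v$, hence so is the image of $s_*(c)$ in $H^1(G_v,A(\bar K_v))$ for all $v$; that is, $s_*(c)$ lies in the classical $n$-Selmer group $\mathrm{Sel}_n(A/K)=\ker\bigl(H^1(G,A[n])\to\prod_v H^1(G_v,A(\bar K_v))\bigr)$. Since $s_*$ is injective, this exhibits an injection $\mathrm{Sel}_n(A_0(A)/K)\hra\mathrm{Sel}_n(A/K)$.

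To conclude, I would invoke the finiteness of $\mathrm{Sel}_n(A/K)$---the classical fact underlying weak Mordell--Weil: every Selmer class is unramified outside the finite set $S$ of places consisting of the archimedean ones, those dividing $n$, and those of bad reduction of $A$, and $H^1$ of the Galois group of the maximal extension of $K$ unramified outside $S$, with coefficients in the finite module $A[n]$, is finite. Together with the injection above this gives the Theorem, and the displayed exact sequence then embeds $A_0(A)(K)/nA_0(A)(K)=M^G/nM^G$ into the finite group $\mathrm{Sel}_n(A_0(A)/K)$, which is the Corollary.

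The step carrying the real content is the $G$-equivariant identification $M[n]\cong A[n]$: this is exactly where Roitman's theorem enters, and it is essential that the isomorphism be realized by the Albanese map, so that it is compatible with the Galois actions and with passage to the completions $K_v$---were that not so, the comparison of local conditions would collapse. The only other obstacle I anticipate is foundational: setting up the descent machinery for the module $M$, which is not finitely generated (divisibility and discreteness of $M$ and the $M_v$, well-definedness of the local conditions, the cohomological structure of $H^1(G,M)$), which is what the Mumford--Roitman theory of Chow schemes supplies. Granting these inputs, the remainder is a verbatim transcription of the classical $n$-descent argument.
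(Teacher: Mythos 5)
Your proposal is correct, and its skeleton coincides with the paper's: embed $A_0(A)(K)/nA_0(A)(K)$ into a Selmer group attached to the Kummer sequence of the divisible discrete $G$-module $A_0(A)$, and use Roitman's theorem to identify $A_0(A)[n]$ with $A[n]$ as Galois modules. Where you genuinely diverge is in the finiteness step. You observe that the albanese map induces, for every place $v$, a commutative square comparing the localization $H^1(G,A_0(A)[n])\to H^1(G_v,A_0(A_{\bar K_v}))$ with $H^1(G,A[n])\to H^1(G_v,A(\bar K_v))$, so that the isomorphism $H^1(G,A_0(A)[n])\cong H^1(G,A[n])$ carries $S^n(A_0(A)/K)$ injectively into the classical Selmer group $S^n(A/K)$, whose finiteness you then quote. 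The paper instead lifts a class of $S^n(A_0(A)/K)$ to $H^1(G,A[n])$ and argues place by place, at places of good reduction not dividing $n$, via specialization of zero-cycles and injectivity of $n$-torsion under reduction, that the class is unramified outside a finite set $S$, and then applies the finiteness of $H^1(G,A[n];S)$ (the lemma quoted from Silverman). Your route is shorter and has a technical advantage: it uses only the albanese homomorphism $A_0(A)\to A$, which is an honest map of Galois modules, whereas the paper's comparison diagrams feature vertical arrows $H^1(G_v,A_v)\to H^1(G_v,A_0(A_v))$ that would have to come from a homomorphism $A\to A_0(A)$; the natural candidate $a\mapsto [a]-[0]$ is not additive modulo rational equivalence (its defect is exactly the Albanese kernel), so those arrows require a justification your argument never needs. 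Both routes ultimately rest on the same classical input (finiteness of unramified $H^1$ with coefficients in the finite module $A[n]$), and both require the divisibility of $A_0(A)$ over algebraically closed fields, which you state and prove explicitly and the paper uses tacitly; you should likewise make explicit, as you do, that the image of $A_0(A)(K)/nA_0(A)(K)$ under the connecting map consists of local coboundaries and hence lands in the Selmer group.
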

The importance of this result from the perspective of algebraic cycles lies in the Beilinson's conjecture on  the albanese kernel which says that the kernel of the albanese map from the group of degree zero cycles on a smooth projective variety over $\bar \QQ$ to the albanese variety has trivial kernel. It is known that this restriction on the ground field is sharp. That is if we consider a one variable transcendental extension of the field $\bar \QQ$, then over this field there are varieties for which the albanese kernel is non-trivial (see \cite {GG},\cite{GGP}). From the above result we can conclude that the quotient $T(A(K))/nT(A(K))$ of the Albanese kernel (denoted by $T(A)$) for an abelian variety $A$ is finite. Here $n$-is a positive integer and $A(K)$ denote the group of $K$-points on $A$. So it is worth studying the Tate-Shafarevich group and the Selmer group of the albanese kernel. This is why we need an approach to understand the Tate-Shafarevich group and the Selmer group of the group of degree zero cycles in terms of Tate-Shafarevich group and Selmer group of finite product of Jacobians of smooth projective curves on the abelian variety. This will be dealt in detail in a sequel. Also the results due to Schoen and Totaro, \cite{Sc}, \cite{To} tell us that for higher codimensional cycles the answer to the question of Colliot-Th\'{e}l\`{e}ne is negative. We intend to prove the finiteness of Chow group mod $n$ for higher codimensional cycles under certain conditions or put in another way we would be  interested to understand the obstruction of the finiteness of Chow group mod $n$ in terms of cycle-theoretic phenomena.

{\small \textbf{Acknowledgements:} The first  author thanks the hospitality of Harish Chandra Research Institute India for hosting this project. Both the authors thank Azizul Hoque for carefully listening to the arguments present in the paper.}

\section{Tate-Shafarevich group of the Chow group of an abelian variety}
Let $K$ be a number field and let $\overline{K}$ denote its algebraic closure. Let $A$ be an abelian variety defined over the number field $K$. Then we have a natural Galois action of the absolute Galois group $\Gal(\bar K/K)$ and this  action induces further an action on the Chow group of zero cycles on the abelian variety $A$. Here the Chow group is the free abelian group generated by closed points on $A(\bar K)$ modulo the rational equivalence. We denote this group by $\CH_0(A)$.

Consider continuous functions $f$ from $G=\Gal(\bar K/K)$ to $\CH_0(A)$ satisfying
$$
f(\sigma\tau)=f(\sigma)+\sigma f(\tau)\;.
$$
The set of all such functions form a group denoted by $Z^1(G,\CH_0(A))$. Let us consider the subgroup of $Z^1(G,\CH_0(A))$ consisting of elements $f$ such that
$$
f(\sigma)=\sigma.x-x
$$
where $x\in\CH_0(A)$ and we denote this subgroup by $B^1(G,\CH_0(A))$. Let
$$
H^1(G,\CH_0(A)):=Z^1(G,\CH_0(A))/B^1(G,\CH_0(A)).
$$
There is a natural homomorphism of abelian groups from $\CH_0(A)$ to $A$ and so by functoriality of group cohomology  this homomorphism descends to a homomorphism from $H^1(G,\CH_0(A))$ to $H^1(G,A)$. The map from $\CH_0(A)$ to $A$ is denoted by $\alb$, the albanese map. We denote the map from $H^1(G,\CH_0(A))$ to $H^1(G,A)$ as $\alb$. We intend to understand the structure of  $H^1(G,\CH_0(A))$. To that extent, consider the natural map from $\Sym^n A$ to $\CH_0(A)$, which sends an unordered $n$-tuple $\{P_1,\cdots,P_n\}$ of $\bar K$ points on $A$ to the cycle class
$$
\sum_{i=1}^n [P_i]\;.
$$
Now $H^1(G,\CH_0(A))$ is  isomorphic to the colimit of Galois cohomology of finite groups
$$
H^1(\Gal(L/K),\CH_0(A_L))\;
$$
 with $L/K$ is a finite Galois extension and $A_L$ is the collection of $L$-points on $A$. As $G$ is a profinite, the range of any function $\eta$ from $G$ to $\CH_0(A)$ is finite.  Let $Z_{l,m}$  be the collection of all maps $\eta$ from $G$ to $\CH_0(A)$ such that $\eta$ factors through $\Sym^l A\times \Sym^m A$ (this can be obtained by decomposing a zero cycle into positive and negative parts). That is, we identify the maps $\eta$, factoring through $\Sym^l A\times \Sym^m A$, with its image inside $\Sym^l A\times \Sym^m A$. There exists a normal subgroup of $G$ of finite index (call it $N$), such that $\eta$  factors through $G/N$. On the other hand, suppose there is a collection of points on $\Sym^l A\times \Sym^m A$, then we can define a map from $G/N$ to $\Sym^l A\times \Sym^m A$ by assigning the cosets of $N$ to this finite collection of points of $\Sym^l A\times \Sym^m A$. Such a map will be continuous from $G/N$ to $\Sym^l A\times \Sym^m A$, equipped with discrete topology, as $G/N$ is finite. Since the quotient map from $G$ to $G/N$ is  continuous, the map from $G$ to $\Sym^l A\times \Sym^m A$ is also continuous. But these maps are non-canonical as it depends on the choice of the points and their assignments to the left cosets of $N$.  The relation that defines $Z^1(G,\CH_0(A))$ is,
$$
\eta(\sigma \tau)=\eta(\sigma)+\sigma\eta(\tau)\;.
$$
Since this relation happens on $\CH_0(A)$ we have that the cycles
$$
\eta(\sigma\tau)
$$
is rationally equivalent to
$$
\eta(\sigma)+ \sigma\eta(\tau)\;.
$$
Thus there exists a map from $\PR^1_{\bar K}$ to $\Sym^d A$, and a positive zero cycle $B$ such that
$$
f(0)=\eta(\sigma\tau)+B, \quad f(\infty)=\eta(\sigma)+\sigma\eta(\tau)+B\;.
$$
Now appealing to  the theorem of Roitman \cite{R}, the collection of $\eta$, such that
$$
\eta(\sigma\tau)
$$
{is rationally equivalent to}
 $$
 \eta(\sigma)+\sigma\eta(\tau),
 $$
is a countable union of Zariski closed subsets inside the symmetric power $\Sym^l A\times \Sym^m A$ such that range of $\eta$ is contained in $\Sym^l A\times \Sym^m A$.   We include a proof of this  argument (it can be found in \cite{R}) for the sake of completeness:
\begin{theorem}
The collection of all $\eta$ contained in $Z_{l,m}$ such that
$$
\eta(\sigma\tau)
$$
which is rationally equivalent to
$$
\eta(\sigma)+\sigma\eta(\tau),
$$
is a countable union of Zariski closed subsets inside $\Sym^l A\times \Sym^l A$ (denoted by $Z^1_{l}$).
\end{theorem}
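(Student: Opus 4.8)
The plan is to reduce the statement to the Mumford--Roitman description of rational equivalence of zero cycles, and then to treat the twist by the Galois action separately. First I would fix a finite-index normal subgroup $N\trianglelefteq G$ through which a given $\eta$ factors, and identify such an $\eta$ with the tuple of its values $(\eta(g))_{g\in G/N}$, a point of the projective variety $Y_N=\prod_{g\in G/N}(\Sym^l A\times\Sym^l A)$ (base changed to $\overline{K}$); writing $\eta(g)=(\eta(g)^+,\eta(g)^-)$ separates the positive and negative parts of the corresponding zero cycle. Here one uses that the cocycle relation forces $g\mapsto\deg\eta(g)$ to be a continuous homomorphism $G\to\ZZ$, hence zero, so each $\eta(g)$ has degree $0$ and the two symmetric powers carry the same exponent $l$, matching the statement. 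Since there are only countably many such $N$, and a countable union of countable unions of Zariski closed subsets is again one, it suffices to treat a single $N$. For fixed $N$ the cocycle condition is the conjunction of the finitely many conditions, one per pair $(\sigma,\tau)\in(G/N)^2$,
$$
\eta(\sigma\tau)\ \sim\ \eta(\sigma)+\sigma\,\eta(\tau)\quad\text{in }\CH_0(A),
$$
and a finite intersection of countable unions of Zariski closed subsets is again one, so it is enough to show each single condition cuts out a countable union of Zariski closed subsets of $Y_N$.

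Next I would clear the negative parts: the displayed rational equivalence is equivalent to an equality in $\CH_0(A)$ of the two effective cycles
$$
\alpha(\eta)=\eta(\sigma\tau)^++\eta(\sigma)^-+(\sigma\,\eta(\tau))^-,\qquad \beta(\eta)=\eta(\sigma)^++(\sigma\,\eta(\tau))^++\eta(\sigma\tau)^-,
$$
each of degree $D=3l$, since $\alpha(\eta)-\beta(\eta)=\eta(\sigma\tau)-\eta(\sigma)-\sigma\,\eta(\tau)$ as cycles. Apart from the twist by $\sigma$, the assignments $\eta\mapsto\alpha(\eta)$ and $\eta\mapsto\beta(\eta)$ are $\overline{K}$-morphisms $Y_N\to\Sym^D A$, composed from the coordinate projections and the addition maps $\Sym^a A\times\Sym^b A\to\Sym^{a+b}A$. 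Thus, modulo the twist, the locus in question is the preimage under $(\alpha,\beta)\colon Y_N\to\Sym^D A\times\Sym^D A$ of the rational equivalence locus $R_D=\{(a,b):a\sim b\}$. Roitman's argument (cf. \cite{R}) shows that $R_D$ is a countable union of Zariski closed subsets: $a\sim b$ holds iff for some integers $e,e'$ there exist an effective cycle $y\in\Sym^e A$ and a morphism $f$ of bounded degree $e'$ from $\PR^1_{\overline{K}}$ to $\Sym^{D+e}A$ with $f(0)=a+y$ and $f(\infty)=b+y$ (one $\PR^1$ suffices once the auxiliary $y$ is allowed, by concatenating and pushing forward the curve chain realising the equivalence); the incidence scheme
$$
\widetilde M_{e,e'}=\{(f,y,a,b)\ :\ f(0)=a+y,\ f(\infty)=b+y\}
$$
is a closed subscheme of $\mathrm{Mor}_{e'}(\PR^1_{\overline{K}},\Sym^{D+e}A)\times\Sym^e A\times\Sym^D A\times\Sym^D A$, and $R_D$ is the union over the countably many pairs $(e,e')$ of the images of the $\widetilde M_{e,e'}$ under the projection to $\Sym^D A\times\Sym^D A$. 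Pulling back along the morphism $(\alpha,\beta)$ preserves the property of being a countable union of Zariski closed subsets.

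It remains to handle the twist, which enters $\alpha$ and $\beta$ only through the terms $(\sigma\,\eta(\tau))^{\pm}$. For a fixed $\sigma$ the operation $z\mapsto\sigma z$ on the $\overline{K}$-points of $\Sym^b A$ is a $\sigma$-semilinear bijection, and is not induced by a $\overline{K}$-morphism; nevertheless the preimage under it of any Zariski closed subset $V$ of $\Sym^b A$ is again Zariski closed, namely the conjugate subvariety obtained by applying $\sigma^{-1}$ to the coefficients of the equations defining $V$. Hence $z\mapsto\sigma z$ pulls countable unions of Zariski closed subsets back to countable unions of Zariski closed subsets. Factoring $\eta\mapsto(\alpha(\eta),\beta(\eta))$ as a $\overline{K}$-morphism, followed by this semilinear map in the appropriate coordinates, followed by further $\overline{K}$-morphisms (the addition maps), and combining this with the description of $R_D$, one concludes that the single-pair locus, and therefore — after intersecting over the pairs $(\sigma,\tau)$ and taking the union over $N$ — the set $Z^1_l$, is a countable union of Zariski closed subsets.

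I expect the main difficulty to be the passage from ``constructible'' to ``countable union of Zariski closed'' in the description of $R_D$: the image of $\widetilde M_{e,e'}$ in $\Sym^D A\times\Sym^D A$ is a priori only a constructible set, and over the uncountable field $\overline{K}$ a constructible set need not be a countable union of closed subsets. This is precisely the point addressed in \cite{R}, either by Noetherian induction on the closure of the image (on a dense open subset one obtains a genuine closed piece, and the complement, of strictly smaller dimension, is handled inductively) or by replacing $\mathrm{Mor}(\PR^1,-)$ with a projective parameter space — a Chow variety of $\PR^1_{\overline{K}}\times\Sym^{D+e}A$ — so that the images are genuinely closed, at the cost of keeping track of which members are honest $\PR^1$-families. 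The remaining points — the bookkeeping of positive and negative parts and of degrees, and the semilinearity of the Galois action — are routine once this input is in hand.
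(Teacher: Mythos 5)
Your proposal is correct in outline and rests on the same Mumford--Roitman mechanism as the paper, but it is organized differently and is worth comparing. The paper builds the incidence variety directly on the space of cocycles: it forms the fibred product of $\Hom^v(\PR^1,\Sym^{l+u,l+u}(A))\times\Sym^u(A)$ with $\prod_i(\Sym^lA\times\Sym^lA)$ over the evaluation maps, projects to get the constructible sets $W^{u,v}_{l,n}$, and then proves closedness of the relevant union by the curve-lifting argument (pick a boundary point, run an irreducible curve through it, lift to a curve $T$ in the Hom scheme, normalize, resolve the indeterminacy of $\tilde T\times\PR^1\dashrightarrow\Sym^l(A)$, and evaluate at a point of the fibre over the boundary point). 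You instead factor the condition through the universal rational-equivalence locus $R_D\subset\Sym^DA\times\Sym^DA$ and pull it back along $(\alpha,\beta)$; this is cleaner and more modular, and it lets you isolate the two issues the paper glosses over: the degree-zero constraint coming from the cocycle relation (which explains why the statement ends up in $\Sym^lA\times\Sym^lA$ rather than $\Sym^lA\times\Sym^mA$), and the fact that $z\mapsto\sigma z$ is only $\sigma$-semilinear, not a $\overline K$-morphism --- the paper's map $s$ is asserted to be a regular morphism, which is not literally true because of the twist, and your observation that semilinear maps still pull closed sets back to closed sets is the correct repair. The one place where your writeup is weaker than the paper's is exactly the step you flag yourself: the passage from ``image of the incidence scheme is constructible'' to ``countable union of Zariski closed subsets.'' You defer this to \cite{R} and sketch two standard remedies, whereas the paper's proof consists mostly of carrying out one of them (the limit/normalization argument showing that a boundary point of $\overline W^{0,v}_{l,n}$ still satisfies the rational equivalence). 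If you intend your argument to be self-contained at the level the paper aims for, that step needs to be written out; as a reduction to a correctly cited result it is fine.
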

\begin{proof}
Let us consider the following reformulation of the definition of rational equivalence. Let $Z_1$ and $Z_2$ be two codimension $0$-cycles. They are rationally equivalent if there exists a positive $0$-cycle $B$, such that $Z_1+B,Z_2+B$ belong to $\Sym^d(A)$ for some fixed $d$, and there exists a regular morphism $f$ from $\PR^1_{\bar K}$ to $\Sym^d(A)$, such that
$$
f(0)=Z_1+B,\quad f(\infty)=Z_2+B\;.
$$
Let  $\eta$ be such that its range is contained in $\Sym^{l} A\times \Sym^l A$. Then
$$
\eta(\sigma\tau)
$$
{is rationally equivalent to }
$$\eta(\sigma)+\sigma\eta(\tau)\;,$$
for every $\sigma,\tau$.
 So there exists a positive cycle $B$ and a regular map $f$ from $\PR^1_{\bar K}$ to $\Sym^{l+u}(A)$  such that
$$f(0)=\eta(\sigma\tau)+B,\quad f(\infty)=\eta(\sigma)+\sigma\eta(\tau)+B\;.$$
Thus it is natural to consider the  subvarieties of $\prod_{i=1}^n \Sym^{l}(A)\times \Sym^{l}(A)$ denoted by $W^{u,v}_{l,n}$ consisting of $\eta$ such that the above equation is satisfied.

Let $(\eta,f,B)$ be such  that  $B\in \Sym^{u}(A)$ and  $f$ in $\Hom^v(\PR^1,\Sym^{l+u}(A))$, for some positive integer $v$ satisfying
$$
f(0)=\eta(\sigma\tau)+B,\quad f(\infty)=\eta(\sigma)+\sigma\eta(\tau)+B\;,
$$
for fixed $\sigma,\tau$.

For simplicity we denote $\PR^1_{\bar K}$ as $\PR^1$.
Here $\Hom^v(\PR^1,\Sym^{l+u}(A))$ is the Hom scheme of degree $v$ morphisms from $\PR^1$ to $\Sym^{l+u}(A)$.
Let us denote $\prod_{i=1}^n \Sym^{d_i}(A)$ as $\Sym^{d_1,\cdots,d_n}(A)$.

  Let
  $$
  e:\Hom ^v(\PR ^1,\Sym^{l+u,l+u}(A))\to
  \Sym^{l+u,l+u}(A)
  $$
be the evaluation morphism sending $f:\PR ^1\to \Sym^{l+u,l+u}(A)$ to the ordered pair $(f(0),f(\infty ))$, and,
  $$
  s:\prod_{i=1}^n (\Sym^l A\times \Sym^l A)\times {\Sym^{u}(A)}\to \Sym^{l+u,l+u}(A)
  $$
be  the regular morphism sending $(\eta,B)$ to $(\eta(\sigma\tau)+B,\eta(\sigma)+\sigma\eta(\tau))+B)$.  Subsequently $e$ and $s$ allow us to consider the fibred product
  $$
  V=\Hom ^v(\PR ^1,\Sym^{l+u,l+u}(A))\times _{\Sym^{l+u,l+u}(A)}\prod_{i=1}^n (\Sym^l A\times \Sym^l A)\times {\Sym^{u}(A)}\; .
  $$
$V$ is a  union of Zariski closed subsets in the product
  $$
  \Hom ^v(\PR ^1,\Sym^{n+u,n+u}(A))\times \prod_{i=1}^n (\Sym^l A\times \Sym^l A)\times {\Sym^{u}(A)}
  $$
over $\Spec (\bar K)$ consisting of tuples $(f,\eta,B)$ such that
  $$
  e(f)=s(\eta,B)\; .
  $$
That is,
  $$
  (f(0),f(\infty ))=(\eta(\sigma\tau)+B,\eta(\sigma)+\sigma\eta(\tau)+B)\; .
  $$
The latter equality gives
  $$
  \pr_2(V)= W_{l,n}^{u,v}\; .
  $$
Vice versa, if $\eta$ is a closed point of $W_{l,n}^{u,v}$, there exists a regular morphism
  $$
  f\in \Hom ^v(\PR ^1,\Sym^{l+u,l+u}(A))
  $$
with $ f(0)=\eta(\sigma\tau)+B$ and $ f(\infty )=\eta(\sigma)+\sigma\eta(\tau)+B$. Then $(f,\eta,B) \in V$.
So $W_{l,n}^{u,v}$ is a finite  union of quasi-projective varieties (or of constructible sets).

Let $(\eta) \in W_{l,n}^{u,v}$. Then there exists $f \in \Hom^v(\PR^1,\Sym^{l+u}(A))$ and $B\in \Sym^u(A)$ such that
$$
f(0)=\eta(\sigma\tau)+B,\quad f(\infty)=\eta(\sigma)+\sigma\eta(\tau)+B\;.
$$
Hence
 $$
 (\eta(\sigma\tau)+B,\eta(\sigma)+\sigma\eta(\tau)+B)\in W_{l+u,n}^{0,v}\;.
 $$
On the other hand consider the map
$$
\tilde{s}:\prod_{i=1}^n (\Sym^l A\times \Sym^l A)\times \Sym^{u}(A)\to \Sym^{l+u,l+u}(A)
$$
given by
$$
(\eta,B)\mapsto (\eta(\sigma\tau)+B,\eta(\sigma)+\sigma\eta(\tau)+B)\;.
$$
Then by above,
$$
W_{l,n}^{u,v}\subset \pr_{1,2}(\tilde{s}^{-1}(W_{l,n+u}^{0,v}))\;.
$$
Conversely, suppose that $(\eta(\sigma\tau)+B,\eta(\sigma)+\sigma\eta(\tau)+B)\in \pr_{1,2}(\tilde{s}^{-1}(W_{l,n+u}^{0,v}))$. Then there exists $f \in \Hom^v(\PR^1,\Sym^{l+u}(A))$ satisfying
$$
f(0)=\eta(\sigma\tau)+B,\quad f(\infty)=\eta(\sigma)+\sigma\eta(\tau)+B\;.
$$
Which implies that $\eta\in W_{l,n}^{u,v}$ and thus
  $$
  W_{l,n}^{u,v}=\pr _{1,2}(\tilde s^{-1}(W_{l+u,n}^{0,v}))\; .
  $$

Since $\tilde s$ is continuous and $\pr _{1,2}$ is proper,
  $$
  \bar W_{l,n}^{u,v}=
  \pr _{1,2}(\tilde s^{-1}(\bar W_{l+u,n}^{0,v})\; .
  $$
So it is enough to show that $\bar W_{l,n}^{0,v}\subset W_{l,n}$.

Let $\eta$ be a closed point of $\bar W_{l,n}^{0,v}$.  Suppose
  $$
  \eta\in \bar W_{l,n}^{0,v}\smallsetminus W_{l,n}^{0,v}\; .
  $$
Let $W$ be an irreducible component of the countable union of the  quasi-projective varieties $W_{l,n}^{0,v}$ whose Zariski closure $\bar W$ contains the point $\eta$. Let $U$ be an affine neighbourhood of $\eta$ in $\bar W$. Now $U\cap W$ is non-empty  as $\eta$ is in the closure of $W$.

We show that one can always take an irreducible curve $C$ passing through $\eta$ in $U$. Let us consider $U$ as $\Spec (A)$. It is enough to show that there exists a prime ideal in $\Spec (A)$ of height $n-1$, where $n$ is the dimension of $\Spec (A)$, where $A$ is Noetherian. Since $A$ is of dimension $n$ there exists a maximal chain of prime ideals
 $$
  \gop _0 \subset \gop _1 \subset \cdots \subset \gop_n=\gop.
 $$
 Now consider the subchain
 $$
   \gop _0 \subset \gop _1 \subset \cdots \subset \gop _{n-1}\; .
 $$
This is a chain of prime ideals and $\gop _{n-1}$ is a prime ideal of height $n-1$ and thus we get an irreducible curve.

Let $\bar C$ be the Zariski closure of $C$ in $\bar W$. Two evaluation  morphisms $e_0$ and $e_{\infty }$ from $\Hom ^v(\PR ^1,\Sym^{l}(A))$ to $\Sym^{l}(A)$ give the regular morphism
  $$
  e_{0,\infty }:\Hom ^v(\PR ^1,\Sym^{l}(A))\to \Sym^{l,l}(A)\; .
  $$
Then $W_{l,n}^{0,v}$ is exactly the Cartesian product  of $\prod_{i=1}^n \Sym^l A\times \Sym^l A$ and the Hom scheme $\Hom^v(\PR^1,\Sym^{l}A)$ over $\Sym^{l}A\times \Sym^{l}A$. Here the map from $\prod_{i=1}^n \Sym^l A\times \Sym^l A$ to $\Sym^{l,l}A$ is the evaluation map
$$
\eta\mapsto (\eta(\sigma\tau),\eta(\sigma)+\sigma\eta(\tau))
$$
and we can choose a quasi-projective curve $T$ in $\Hom ^v(\PR ^1,\Sym^{l}(A))$, such that the closure of the image $e_{0,\infty }(T)$ is $\bar C$.

For that consider the curve $C$ in $W$ so it is contained in $W_{l,n}^{0,v}$ and this set is given by the above Cartesian product. We consider the inverse image of $\bar C$ under the morphism $e_{0,\infty }$ and as $\bar C$ is a curve,  $\dim(e_{0,\infty }^{-1}(C))\geq 1$. So it contains a curve. Consider two points on $\bar C$ and consider their inverse images under $e_{0,\infty }$. As $\Hom ^v(\PR ^1,\Sym^{l}(X))$ is a quasi  projective variety, $e_{0,\infty }^{-1}(\bar C)$ is also quasi projective, therefore we can embed it into some $\PR ^m$ and consider a smooth hyperplane section through the two points fixed above. Continuing this procedure we get a curve containing these two points and contained in $e_{0,\infty }^{-1}(C)$. Therefore we get a curve $T$ mapping onto $\bar C$ and thus the closure of the image of $T$ is $\bar C$.

Now, as we have noted above, $\Hom ^v(\PR ^1,\Sym^{l}(A))$ is a quasi-projective variety and thus can be embedded into some projective space $\PR ^m$. Let $\bar T$ be the closure of $T$ in $\PR ^m$ and $\tilde T$ be the normalization of $\bar T$.  Also let $\tilde T_0$ be the pre-image of $T$ in $\tilde T$. We consider the composition
  $$
  f_0:\tilde T_0\times \PR ^1\to T\times \PR ^1\subset
  \Hom ^v(\PR ^1,\Sym^{l}(A))\times \PR ^1
  \stackrel{e}{\to }\Sym^{l}(A)\; ,
  $$
where $e$ is the evaluation morphism $e_{\PR ^1,\Sym^{l}(A)}$. The regular morphism $f_0$ defines a rational map
  $$
  f:\tilde T\times \PR ^1\dasharrow \Sym^{l}(A)\;.
  $$
Then by resolution of singularities, $f$ could be extended to a regular map from $(\tilde T\times \PR^1)'$ to $\Sym^{l}(A)$, where $(\tilde T\times \PR^1)'$ denotes the blow up of $\tilde T\times \PR^1$ along the indeterminacy locus which is a finite set of points. We continue to call the strict transform of $\tilde T $ in the blow up as $\tilde T$, and the pre-image of $T$ as $\tilde {T_0}$

Now the regular morphism $\tilde T_0\to T\to \bar C$ extends to the regular morphism $\tilde T\to \bar C$. Let $P$ be a point in the fibre of this morphism at $\eta$. For any closed point $Q$ on $\PR ^1$ the restriction $f|_{\tilde T\times \{ Q\} }$ of the rational map $f$ onto $\tilde T\times \{ Q\} (\simeq \tilde T)$ is regular on the whole curve $\tilde T$, ($\tilde T$ is non-singular). Then
  $$
  (f|_{\tilde T\times \{ 0\} })(P)=\eta(\sigma\tau)
  \qquad \hbox{and}\qquad
  (f|_{\tilde T\times \{ \infty \} })(P)=\eta(\sigma)+\sigma\eta(\tau)\; .
  $$
Now
$$
f:\{P\}\times \PR^1\to \Sym^{l}(A)
$$
has the property that
$$
f(0)=\eta(\sigma\tau),\quad f(\infty)=\eta(\sigma)+\sigma\eta(\tau)\;.
$$
Hence  $W_{l,n}^{0,v}$ is Zariski closed and that in turn gives $W_{l,n}^{u,v}$ is infact Zariski closed.
\end{proof}
Similarly we can prove that the collection of $\eta$ in $Z_{l,m}$ such that $\eta(\sigma)$ is rationally equivalent to $\sigma.z-z$ (for a fixed zero cycle $z$) is a countable union of Zariski closed subsets in $Z_{l,m}$. We call it $B^1_{l,m}$.
Therefore we can conclude from the above theorem that:
\begin{theorem}
$H^1(G,\CH_0(A))$ admits a surjective map from the countable union $\cup_{l,m}Z^1_{l,m}$ such that $\cup_{l,m}B^1_{l,m}$ is mapped to a point under this surjection.
\end{theorem}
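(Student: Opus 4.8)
The plan is to exhibit the surjection concretely: push a cochain forward along the natural map $\Sym^l A\times\Sym^m A\to\CH_0(A)$ and then take its class in $H^1$. Surjectivity will come from the fact that a continuous cochain on a profinite group has finite image, and the statement about $\cup_{l,m}B^1_{l,m}$ will drop out of the definitions.

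First I would fix the map $\pi_{l,m}\colon\Sym^l A\times\Sym^m A\to\CH_0(A)$ on $\bar K$-points sending $(a,b)$ to $[a]-[b]$; since $A$ is defined over $K$ this map is $G$-equivariant, and it is precisely the map realising $Z_{l,m}$ inside $\CH_0(A)$. A point $\eta\in Z^1_{l,m}$ is a locally constant function $\eta\colon G\to\Sym^l A\times\Sym^m A$ with $\eta(\sigma\tau)$ rationally equivalent to $\eta(\sigma)+\sigma\eta(\tau)$ (this is how $Z^1_{l,m}$ was realised, inside a product of finitely many copies of $\Sym^l A\times\Sym^m A$, in the preceding theorem). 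Put $\bar\eta:=\pi_{l,m}\circ\eta$. Using that rational equivalence of $0$-cycles is equality in $\CH_0(A)$, and that $\pi_{l,m}(\sigma\cdot\eta(\tau))=\sigma\cdot\bar\eta(\tau)$ by equivariance, the defining relation of $Z^1_{l,m}$ becomes the cocycle identity $\bar\eta(\sigma\tau)=\bar\eta(\sigma)+\sigma\cdot\bar\eta(\tau)$ in $\CH_0(A)$; and $\bar\eta$ is again locally constant, hence continuous, so $\bar\eta\in Z^1(G,\CH_0(A))$. I would then set $\eta\mapsto[\bar\eta]$. Since the degree-raising maps $Z^1_{l,m}\hookrightarrow Z^1_{l+1,m+1}$ (adding a fixed point of $A$ to both coordinates) leave $\bar\eta$ unchanged in $\CH_0(A)$, these maps agree on overlaps and assemble into a single map $\Phi\colon\cup_{l,m}Z^1_{l,m}\to H^1(G,\CH_0(A))$.

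For surjectivity I would take $c\in H^1(G,\CH_0(A))$ and a representing cocycle $f$. Since $G$ is profinite and $\CH_0(A)$ discrete, $f$ has finite image and factors through a finite quotient $G/N$; moreover $\sigma\mapsto\deg f(\sigma)$ is a continuous homomorphism $G\to\ZZ$, hence zero, so every $f(\sigma)$ has degree $0$. For each $\xi$ in the finite set $f(G)$ write $\xi=[a_\xi]-[b_\xi]$ with $a_\xi,b_\xi$ effective, and add to both $a_\xi$ and $b_\xi$ the same multiple of a fixed point $O$ of $A$ so that $\deg a_\xi=\deg b_\xi=d$ for one $d$ depending only on $f$. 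Then $\eta(\sigma):=(a_{f(\sigma)},b_{f(\sigma)})$ is a locally constant function $G\to\Sym^d A\times\Sym^d A$ with $\pi_{d,d}\circ\eta=f$, and since $f$ is a cocycle, $\eta(\sigma\tau)$ is rationally equivalent to $\eta(\sigma)+\sigma\eta(\tau)$, whence $\eta\in Z^1_{d,d}\subset\cup_{l,m}Z^1_{l,m}$ and $\Phi(\eta)=c$. Finally, if $\eta\in B^1_{l,m}$ then $\eta(\sigma)$ is rationally equivalent to $\sigma\cdot z-z$ for a fixed $0$-cycle $z$, so $\bar\eta(\sigma)=\sigma\cdot z-z$ in $\CH_0(A)$ for every $\sigma$, i.e. $\bar\eta\in B^1(G,\CH_0(A))$; hence $\Phi$ carries all of $\cup_{l,m}B^1_{l,m}$ to the single class $0$.

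The step I expect to require the most care is the bookkeeping behind surjectivity: one must verify that the positive/negative decomposition and the degree-raising can be performed uniformly over the finite set $f(G)$ and compatibly with the factorisation through $G/N$, so that the resulting $\eta$ genuinely lies in the countable union of Zariski-closed sets furnished by the preceding theorem; in particular one needs that $\sigma$ acting on the $0$-cycle $[a_\xi]-[b_\xi]$ is represented by $(\sigma a_\xi,\sigma b_\xi)$, which is exactly where the $G$-equivariance of $\pi_{d,d}$ enters. Everything else is a transcription of the definitions, using repeatedly that rational equivalence of $0$-cycles is equality in $\CH_0(A)$ and that the preceding theorem and the remark following it already present $Z^1_{l,m}$ and $B^1_{l,m}$ as countable unions of Zariski-closed subsets.
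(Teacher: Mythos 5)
Your proposal is correct and follows essentially the same route the paper takes: the paper's ``proof'' of this statement is just the discussion preceding it (factoring a continuous cocycle through a finite quotient $G/N$, decomposing its finitely many values into positive and negative parts to land in $\Sym^l A\times\Sym^m A$, and invoking the preceding theorem for the structure of $Z^1_{l,m}$ and $B^1_{l,m}$), and your $\pi_{l,m}$, the degree-zero observation, and the explicit lift $\sigma\mapsto(a_{f(\sigma)},b_{f(\sigma)})$ merely make that argument precise. If anything, your write-up supplies details (well-definedness under the degree-raising maps, why $\deg f(\sigma)=0$) that the paper leaves implicit.
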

We continue exploring this map $Z^1_{l,m}$ to $H^1(G,\CH_0(A))$. Let $\eta$ be an element in the set $Z^1_{l,m}$. Then for every $(\sigma,\tau)$, we have,
$$
\eta(\sigma\tau)=\eta(\sigma)+\sigma\eta(\tau)\;.
$$
This equality happens in $\CH_0(A)$ and thus consider the tuples
$$
(\eta,f,B)\in Z_{l,m}\times \Hom^v(\PR^1,\Sym^{n+u,n+u}A)\times \Sym^u B
$$
such that the following equations are satisfied:
$$
f(0)=\eta(\sigma\tau)+B
$$
$$
f(\infty)=\eta(\sigma)+\sigma\eta(\tau)+B\;.
$$
If we denote the above quasiprojective variety by $\bcV$ and consider the projection map from $\bcV$ to $\Hom^v(\PR^1,\Sym^{n+u,n+u}A)$, then it is a $\PR^1$-bundle, which is the pull-back of the $\PR^1$-bundle given by
$$
\{(x,f)|x\in \im (f)\}\subset \Sym^{n+u,n+u}A\times \Hom^v(\PR^1,\Sym^{n+u,n+u}A)\;.
$$
Thus over $Z^1_{l,m}$ we have the universal variety $\bcU^1_{l,m}$ consisting of tuples $(\eta,f,B)$ such that the above equations are satisfied and it has the structure of a rationally connected fibration over the Hom-scheme. Therefore if we consider the finite map from $A^{l+m}$ to $\Sym^l A\times \Sym^m A$, the degree of this finite map is $l!m!$. The pullback of $Z^1_{l,m}$ under this map is a finite branched cover of $Z^1_{l,m}$ denoted by $\widetilde{Z^{1}_{l,m}}$. In turn we have the pull-back of the universal family $\bcU^1_{l,m}$ over $\widetilde{Z^{1}_{l,m}}$ denoted by $\widetilde{\bcU^{1}_{l,m}}$. This is a family of branched covers of $\PR^1$ over the Hom-scheme.

\subsection{The group cohomology of the group of degree zero cycles on A}
Let $A_0(A)$ denote the group of degree zero cycles or the zero cycles algebraically equivalent to zero on $A$. Then there is a natural homomorphism from $A^n$ to $A_0(A)$ given by
$$\sum_i P_i\mapsto \sum_i [P_i-n0]\;,$$
where $0$ is the neutral element of the abelian variety $A$. Then the map from $A^n$ to $A_0(A)$ induces by functoriality a natural homomorphism from $H^1(G,A^n)$ to $H^1(G,A_0(A))$. If we consider the natural map from $A^n$ to $A^{n+1}$ given by
$$
(P_1,\cdots,P_n)\mapsto (P_1,\cdots,P_n,0),
$$
then this map gives rise to the homomorphism from $H^1(G,A^n)$ to $H^1(G,A^{n+1})$ and the homomorphism
$$\theta_n:H^1(G,A^n)\to H^1(G,A_0(A))$$
factors through the above map
$$
H^1(G,A^n)\mapsto H^1(G,A^{n+1})\;.
$$
Thus there is a natural homomorphism from the colimit of the groups $H^1(G,A^n)$ to $H^1(G,A_0(A))$, denoted by $\theta$. Thus
$$
\theta:\varinjlim H^1(G,A^n)\to H^1(G,A_0(A))\;.
$$
Now for each $n$  the group law from $A^n$ to $A$ is given by
$$
(a_1,\cdots,a_n)\mapsto \sum_i a_i,
$$
and this  map gives rise to a natural map from $H^1(G,A^n)$ to $H^1(G,A)$. Note that this map factors through the homomorphism
$$
H^1(G,A^n)\to H^1(G,A^{n+1})\;.
$$
Thus there is a homomorphism
$$
\varinjlim H^1(G,A^n)\to H^1(G,A)\;.
$$
Since the map $H^1(G,A^n)$ to $H^1(G,A_0(A))$ factors through
$$
H^1(G,A)\to H^1(G,A_0(A)),
$$
we have that the map
$$
\varinjlim H^1(G,A^n)\to H^1(G,A_0(A))
$$
factors through the map
$$
H^1(G,A)\to H^1(G,A_0(A))\;.
$$
Now the group on the left is the  Weil-Chatelet group of the respective $A$, which consists of the equivalence classes of principal homogeneous spaces over $A$. This group is denoted by $WC(A)$. Using the identification
$H^1(G,A)\cong WC(A)$,
we have that,
$$
\theta: WC(A)\to H^1(G,A_0(A))\;.
$$
It is natural to consider when this map is injective and surjective.

Now due to the famous result on torsion subgroup in $A_0(A)$ \cite{R2}, we know that this group of torsion is isomorphic to the group of torsion in $A$. So we expect a similar result when we consider the group cohomology
$H^1(G,A)$ and $H^1(G,A_0(A))$.
\begin{theorem}
The kernel of the map $H^1(G,A)[n]\to H^1(G,A_0(A))[n]$ is isomorphic to the group
$$
A_0(A)(K)/nA_0(A)(K)\;.
$$
\end{theorem}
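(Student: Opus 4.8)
My plan is to deduce the statement from Roitman's theorem together with the multiplication-by-$n$ sequences for $A$ and for $A_0(A)$. For $A$ one has the exact sequence of $G$-modules $0\to A[n]\to A\xrightarrow{\,n\,}A\to 0$, whence
$$
0\to A(K)/nA(K)\to H^1(G,A[n])\to H^1(G,A)[n]\to 0 .
$$
For $A_0(A)=\CH_0(A_{\bar K})^0$ I would first record that this group is divisible: every class in it is supported on a curve, hence lies in the image of $\CH_0(C)^0=\Jac(C)(\bar K)$ for some smooth projective curve $C$, and Jacobians over $\bar K$ are divisible; being a quotient of divisible groups, $A_0(A)$ is divisible. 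Hence $0\to A_0(A)[n]\to A_0(A)\xrightarrow{\,n\,}A_0(A)\to 0$ is exact and gives
$$
0\to A_0(A)(K)/nA_0(A)(K)\xrightarrow{\,\delta\,} H^1(G,A_0(A)[n])\to H^1(G,A_0(A))[n]\to 0 ,
$$
with $\delta$ the connecting homomorphism, so that $A_0(A)(K)/nA_0(A)(K)\cong\im\delta=\ker\big(H^1(G,A_0(A)[n])\to H^1(G,A_0(A))[n]\big)$.

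Next I would invoke Roitman's theorem \cite{R2}: the albanese homomorphism $\alb\colon A_0(A)\to A$ commutes with multiplication by $n$, so it maps the second sequence to the first, and it restricts to a $G$-equivariant isomorphism $A_0(A)[n]\xrightarrow{\,\sim\,}A[n]$ on torsion subgroups. Therefore in the induced map of cohomology sequences the middle vertical arrow $H^1(G,A_0(A)[n])\xrightarrow{\,\sim\,}H^1(G,A[n])$ is an isomorphism, and a snake-lemma argument then forces the left-hand vertical $\bar\alb\colon A_0(A)(K)/nA_0(A)(K)\to A(K)/nA(K)$ to be injective; it is also surjective, since $\alb\colon A_0(A)(K)\to A(K)$ is already onto ($Q\mapsto[Q]-[0]$ being a set-theoretic section), so $\bar\alb$ is an isomorphism. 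Transporting the Kummer projection of $A_0(A)$ through the isomorphism $H^1(G,A_0(A)[n])\cong H^1(G,A[n])$ gives the map $H^1(G,A)[n]\to H^1(G,A_0(A))[n]$ of the statement — the restriction to $n$-torsion of the homomorphism $\theta$ introduced above — and under this identification its kernel is $\im\delta\cong A_0(A)(K)/nA_0(A)(K)$, which is the assertion.

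The step I expect to cause the most trouble is this last identification: one has to check that the homomorphism on $H^1$ built earlier from the maps $A^{n}\to A_0(A)$ and the addition maps $A^{n}\to A$ coincides, on $n$-torsion and after Roitman's identification of the finite modules, with the arrow induced by $\alb$, so that the kernel comes out as $\im\delta$ and not as something smaller (the natural danger being that the passage from $H^1(G,A[n])$ to $H^1(G,A)[n]$ collapses it). A subsidiary point needing care is the divisibility of $A_0(A)$ over $\bar K$ that is used to write down its Kummer sequence at all, which must be argued modulo rational equivalence. Granting these, the remainder is the formal snake-lemma diagram chase indicated above.
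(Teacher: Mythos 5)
Your route is the same as the paper's: the Kummer sequence for $A$, the analogous sequence for $A_0(A)$, Roitman's identification $A_0(A)[n]\cong A[n]$ to get the middle isomorphism, and a diagram chase. Two of your additions genuinely improve on the paper's write-up: the paper simply asserts the short exact sequence for $A_0(A)$ ("similarly we have\dots"), whereas you supply the divisibility of $A_0(A)$ needed to write it down; and your snake-lemma observation that $\bar\alb\colon A_0(A)(K)/nA_0(A)(K)\to A(K)/nA(K)$ is an isomorphism is correct and absent from the paper.

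However, the step you flag as the likely trouble spot is a genuine gap, you do not close it, and --- importantly --- the paper does not close it either. Write $\phi_A\colon H^1(G,A[n])\twoheadrightarrow H^1(G,A)[n]$ and $\phi_{A_0}\colon H^1(G,A_0(A)[n])\twoheadrightarrow H^1(G,A_0(A))[n]$ for the two Kummer projections, $\psi$ for the Roitman isomorphism on $H^1$ of the $n$-torsion, and $\theta$ for the map of the statement, so that $\theta\circ\phi_A=\phi_{A_0}\circ\psi$. This square gives $\ker(\theta\circ\phi_A)=\psi^{-1}(\ker\phi_{A_0})\cong A_0(A)(K)/nA_0(A)(K)$, but the theorem is about $\ker\theta$, and since $\phi_A$ is surjective with kernel $\delta_A\bigl(A(K)/nA(K)\bigr)$ one only gets
$$
\ker\theta\;=\;\phi_A\bigl(\psi^{-1}(\ker\phi_{A_0})\bigr)\;\cong\;\frac{\ker\phi_{A_0}}{\psi\bigl(\ker\phi_A\bigr)}\;\cong\;\frac{A_0(A)(K)/nA_0(A)(K)}{\hbox{image of } A(K)/nA(K)}\;,
$$
a quotient of the claimed group rather than the group itself. (The paper's own chase has exactly this defect: the lift $b$ with $\phi_A(b)=a$ is only well defined modulo $\ker\phi_A$, so the assignment $a\mapsto c$ is not well defined as stated.) Nothing in your sketch shows $\psi(\ker\phi_A)=0$, and your own computation points the other way: since $\bar\alb$ and the middle vertical are isomorphisms, the five lemma makes $\alb_*\colon H^1(G,A_0(A))[n]\to H^1(G,A)[n]$ an isomorphism, and if $\psi$ agrees with $(\alb_*)^{-1}$ on torsion coefficients then $\psi\circ\delta_A=\delta_{A_0}\circ\bar\alb^{-1}$ is injective and $\ker\theta$ collapses to zero. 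So the identification of $\ker\theta$ with the full group hinges on pinning down how $\theta$ --- which the paper manufactures from the non-additive assignment $a\mapsto[a]-[0]$ via symmetric powers --- interacts with $\ker\phi_A$; that argument is missing from both your proposal and the paper.
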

\begin{proof}
We consider the exact sequence of abelian groups:
$$
0\to A[n]\to A\stackrel{n}{\to} A\to 0
$$
where $A[n]$ is the group of $n$-torsion on $A$. The map $A\stackrel{n}{\to}A$ is
$$
a\mapsto na\;.
$$
Corresponding to this exact sequence we have the  long exact sequence :
$$
0\to A[n](K)\to A(K)\to A(K)\to H^1(G,A[n])\to H^1(G,A)\to H^1(G,A)
$$
which gives the exact sequence
$$
0\to A(K)/nA(K)\to H^1(G,A[n])\to H^1(G,A)[n]\to 0
$$
Here $H^1(G,A)[n]$ is the group of $n$-torsion of $H^1(G,A)$.
Similarly we have the short exacts sequence
$$
0\to A_0(A)(K)/nA_0(A)(K)\to H^1(G,A_0(A)[n])\to H^1(G,A_0(A))[n]\to 0\;.
$$
Now we consider the natural maps
$$
H^1(G,A[n])\to H^1(G,A_0(A)[n])
$$
and
$$H^1(G,A)[n]\to H^1(G,A_0(A))[n]\;.$$
The first of the above is an isomorphism because
$$
A[n]\cong A_0(A)[n]\;.
$$
So consider the map
$$
\theta:H^1(G,A)[n]\to H^1(G,A_0(A))[n]
$$
and let $\theta(a)=0\;.$
Let us denote the maps
$$
H^1(G,A[n])\to H^1(G,A)[n]
$$
by $\phi_A$ and
$$
H^1(G,A_0(A)[n])\to H^1(G,A_0(A))[n]
$$
by $\phi_{A_0(A)}$. Thus from the above
$$
\theta\phi_A(b)=0
$$
where $\phi_A(b)=a\;.$
Therefore it follows that
$$
\phi_{A_0(A)}\theta(b)=0
$$
Now since $\theta $ is an isomorphism from $H^1(G,A[n])$ to $H^1(G,A_0(A)[n])$, there exists a unique $c$ such that
$$
\theta^{-1}(c)=b\;.$$
Hence
$$
\phi_{A_0(A)}(c)=0\;
$$
and therefore $c\in A_0(A)(K)/nA_0(A)(K)$.

Conversely, for $c\in A_0(A)(K)/nA_0(A)(K)$,
$$
\phi_{A_0(A)}(c)=0\;.
$$
Since there exists unique $b$ such that $\theta(b)=c$,  we have,
$$
\phi_{A_0(A)}\theta(b)=0=\theta\phi_A(b)=0\;.
$$
So the kernel of $\theta$ from $H^1(G,A)[n]$ to $H^1(G,A_0(A))[n]$ is in bijection with the group $A_0(A)(K)/nA_0(A)(K)\;.$ This later group is the images of the increasing union of the groups $A^m(K)/nA^m(K)$. These groups are all finite hence we have that
$$
A_0(A)(K)/nA_0(A)(K)
$$
is a pro-finite group.
\end{proof}
\section{Tate-Shafarevich and Selmer group of $A_0(A)$ and their properties}
We have  the exact sequence
$$
0\to A_0(A)(K)/nA_0(A)(K)\to H^1(G,A_0(A)[n])\to H^1(G,A_0(A))[n]\to 0\;.
$$
Let  $v$ be a place of $K$ and $K_v$ be  the corresponding completion.
 Then consider the algebraic closure $\bar K_v$ of $K_v$ and embed $\bar K$ into $\bar K_v$. This embedding gives an injection of the Galois group $\Gal(\bar K_v/K_v)$ into $\Gal(\bar K/K)$. Considering the Galois cohomology, we have a homomorphism,
$$
H^1(\Gal(\bar K/K),A_0(A(\bar K))\to H^1(\Gal(\bar K_v/K_v),A_0(A(\bar K_v)))\;.
$$
In short we write the groups $\Gal(\bar K/K)$ and $\Gal(K_v/K_v)$ as $G$ and $G_v$ respectively. We have the following commutative diagrams:

$$
  \diagram
  A_0(A)(K)/nA_0(A)(K)\ar[dd]_-{} \ar[rr]^-{} & & H^1(G,A_0(A)[n]) \ar[dd]^-{} \\ \\
  A_0(A)(K_v)/nA_0(A)(K_v) \ar[rr]^-{} & & H^1(G_v,A_0(A_v)[n])
  \enddiagram
  $$

$$
  \diagram
  H^1(G,A_0(A)[n])\ar[dd]_-{} \ar[rr]^-{} & & H^1(G,A_0(A))[n] \ar[dd]^-{} \\ \\
 H^1(G_v,A_0(A_v)[n])\ar[rr]^-{} & & H^1(G_v,A_0(A_v))[n]
  \enddiagram
  $$
Let us consider the map
$$
H^1(G,A_0(A)[n])\to \prod_v H^1(G_v,A_0(A_v)).
$$
\begin{definition}
The kernel of this map is defined to be the Selmer group associated to the map $z\mapsto nz$ and it is denoted by $S^n(A_0(A)/K)$.
\end{definition}

\begin{definition}
The Tate-Shafarevich group is the kernel of the map
$$H^1(G,A_0(A))\to \prod_v H^1(G_v,A_0(A_v))\;,$$
and it is denoted by $TS(A_0(A)/K)$.
\end{definition}
Further we consider the commutative diagram:
$$
  \diagram
  H^1(G,A[n])\ar[dd]_-{} \ar[rr]^-{} & & \prod_v H^1(G_v,A_v)[n] \ar[dd]^-{} \\ \\
 H^1(G,A_0(A)[n])\ar[rr]^-{} & & \prod_v H^1(G_v,A_0(A_v))[n]
  \enddiagram
$$
By Roitman's theorem \cite{R2}, the groups $A[n]$ and $A_0(A)[n]$ are isomorphic, therefore the group cohomologies are isomorphic. So the left vertical arrow in the above diagram is an isomorphism. Suppose there is an element in $S^n(A/K)$, then by the commutativity of the above diagram, the image of this element under the left vertical homomorphism is in $S^n(A_0(A)/K)$. We prove the following theorem:
\begin{theorem}
The group $S^n(A_0(A)/K)$ is finite and hence $A_0(A)(K)/nA_0(A)(K)$ is finite.
\end{theorem}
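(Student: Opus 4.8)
The plan is to bound $S^n(A_0(A)/K)$ by the classical Selmer group $S^n(A/K)$, which is finite (as recalled in the Introduction), exploiting the fact that the albanese map is a Galois-equivariant retraction of the natural inclusion $A\hra A_0(A)$. Concretely, the map $j\colon A\to A_0(A)$, $P\mapsto[P]-[0]$, is a homomorphism of Galois modules because $0$ is $K$-rational, and $\alb$ restricts to $\alb\colon A_0(A)\to A$ with $\alb\circ j=\id_A$; both are defined over $K$, hence over every completion $K_v$. Applying $H^1(G_v,-)$ gives $j_*\colon H^1(G_v,A_0(A_v))\ $ admitting $\alb_*$ as a section, so $H^1(G_v,A_v)\to H^1(G_v,A_0(A_v))$ is split injective for every $v$, whence $\prod_v H^1(G_v,A_v)\to\prod_v H^1(G_v,A_0(A_v))$ is injective. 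On $n$-torsion, Roitman's theorem makes the induced map $A[n]\to A_0(A)[n]$ an isomorphism, so $H^1(G,A[n])\to H^1(G,A_0(A)[n])$ is an isomorphism; these fit into the commutative square
$$
\begin{array}{ccc}
H^1(G,A[n]) & \lra & \prod_v H^1(G_v,A_v) \\
\downarrow & & \downarrow \\
H^1(G,A_0(A)[n]) & \lra & \prod_v H^1(G_v,A_0(A_v))
\end{array}
$$
whose left arrow is an isomorphism, whose right arrow is injective, and whose horizontal maps are the restriction maps defining $S^n(A/K)$ and $S^n(A_0(A)/K)$ as their kernels (an $n$-torsion class dies in $\prod_v H^1(G_v,A_0(A_v))$ iff it dies in its $n$-torsion subgroup).

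Now comes the diagram chase. Given $\eta\in S^n(A_0(A)/K)$, write $\eta=j_*(\xi)$ for the unique $\xi\in H^1(G,A[n])$. Commutativity sends the image of $\xi$ in $\prod_v H^1(G_v,A_v)$ to the image of $\eta$, namely $0$; injectivity of the right vertical arrow then forces the image of $\xi$ to be $0$ already, i.e.\ $\xi\in S^n(A/K)$. Hence $S^n(A_0(A)/K)\subseteq j_*(S^n(A/K))$, which is finite because $S^n(A/K)$ is. (Equality in fact holds, by running the same argument with $\alb_*$ in the other direction, but the inclusion is all that is needed.)

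It remains to deduce finiteness of $A_0(A)(K)/nA_0(A)(K)$. From the short exact sequence $0\to A_0(A)(K)/nA_0(A)(K)\to H^1(G,A_0(A)[n])\to H^1(G,A_0(A))[n]\to 0$ recalled above, it suffices to check that the image of $A_0(A)(K)/nA_0(A)(K)$ in $H^1(G,A_0(A)[n])$ lands in $S^n(A_0(A)/K)$: by compatibility of connecting maps with restriction, the class attached to $x\in A_0(A)(K)$ restricts at each $v$ to the connecting image of $x$ viewed in $A_0(A)(K_v)/nA_0(A)(K_v)$, which maps to $0$ in $H^1(G_v,A_0(A_v))$ since two consecutive maps of the local Kummer sequence compose to $0$. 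Therefore $A_0(A)(K)/nA_0(A)(K)$ injects into the finite group $S^n(A_0(A)/K)$, hence is finite, and this is exactly the corollary sought.

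The crux is the first paragraph: everything hinges on the albanese map being a genuine Galois-equivariant retraction of $A\hra A_0(A)$, which is what yields the injectivity of the local maps $H^1(G_v,A_v)\to H^1(G_v,A_0(A_v))$, on the naturality of the Kummer sequences under $j$ (so that the square commutes), and on Roitman's theorem for the global identification on $n$-torsion. Once these are secured, the classical finiteness of $S^n(A/K)$ transfers purely formally and no further geometric input is required; the countable-union structure of $A_0(A)(K)/nA_0(A)(K)$ noted earlier then collapses to genuine finiteness.
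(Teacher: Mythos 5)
Your reduction hinges on the claim that $j\colon A\to A_0(A)$, $P\mapsto [P]-[0]$, is a homomorphism of Galois modules, so that $j_*$ exists on cohomology and the local maps $H^1(G_v,A_v)\to H^1(G_v,A_0(A_v))$ are split injections retracted by $\alb_*$. That is the gap: $j$ is Galois-equivariant but it is \emph{not} additive. Indeed $j(P+Q)-j(P)-j(Q)=[P+Q]-[P]-[Q]+[0]$ lies in the albanese kernel $T(A)$ and is not known to vanish; its vanishing is essentially the Beilinson-type statement recalled in the introduction of this paper. A non-additive map does not act on cocycles, so neither the right-hand vertical arrow of your square nor its asserted injectivity is available, and the extension $0\to T(A)\to A_0(A)\to A\to 0$, though split as abstract abelian groups ($T(A)$ is divisible), has no known Galois-equivariant splitting that could substitute for $j$. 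By contrast, the restriction of $j$ to $n$-torsion, i.e.\ the inverse of $\alb$ on $A_0(A)[n]$, \emph{is} an isomorphism of $G$-modules by Roitman, so your global identification $H^1(G,A[n])\cong H^1(G,A_0(A)[n])$ is fine; only the local arrows on the full modules fail.

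The argument is repairable, and precisely in the form you mention parenthetically: run everything with $\alb_*$ pointing downward. Since $\alb$ is a genuine homomorphism of Galois modules restricting to the Roitman isomorphism on $n$-torsion, the square
$$
  \diagram
  H^1(G,A_0(A)[n])\ar[dd]_-{\cong} \ar[rr]^-{} & & \prod_v H^1(G_v,A_0(A_v)) \ar[dd]^-{\alb_*} \\ \\
  H^1(G,A[n]) \ar[rr]^-{} & & \prod_v H^1(G_v,A_v)
  \enddiagram
$$
commutes with the left arrow an isomorphism, which immediately gives $S^n(A_0(A)/K)\hookrightarrow S^n(A/K)$ without any injectivity of an upward local map. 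Your deduction of the finiteness of $A_0(A)(K)/nA_0(A)(K)$ via the local Kummer sequences is correct (and spelled out more carefully than in the paper, which only needs $A_0(A)$ divisible, again by Roitman). Note also that even after this repair your route differs from the paper's: you quote the classical finiteness of $S^n(A/K)$ as a black box, whereas the paper lifts to $H^1(G,A[n])$ and re-runs the unramified-outside-$S$ argument (good reduction, specialization of torsion zero-cycles, inflation--restriction) before invoking the finiteness of $H^1(G,M;S)$ for finite $M$; your version, once corrected, is shorter and avoids the local geometric input.
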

The following lemma
\cite{Sil}[lemma 4.3, chapter X]will be used  in the proof of this theorem.
\begin{lemma}
Let $M$ be a finite $G$ module and $S$ be a set of finitely many places in $K$. Consider
 $$H^1(G,M;S)$$
consisting of all elements $\eta$ in $H^1(G,M)$, which are unramified outside $S$. Then $H^1(G,M;S)$ is finite.
\end{lemma}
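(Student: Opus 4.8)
The plan is to reduce, by a chain of standard cohomological reductions, to a computation in class field theory, and then invoke the two fundamental finiteness theorems of algebraic number theory. First, let $L$ be the finite Galois extension of $K$ cut out by the action of $G$ on $M$, so that $G_L=\Gal(\bar K/L)$ acts trivially on $M$, and enlarge $S$ to a finite set $S'$ that also contains the finitely many places of $K$ ramified in $L/K$. Restriction to $G_L$ carries $H^1(G,M;S)\subseteq H^1(G,M;S')$ into $H^1(G_L,M;S'_L)$, where $S'_L$ is the (finite) set of places of $L$ above $S'$; by the inflation-restriction sequence the kernel is a subquotient of $H^1(\Gal(L/K),M)$, which is finite because both $\Gal(L/K)$ and $M$ are finite. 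Hence it suffices to treat the case where $G$ acts trivially on $M$. Decomposing such an $M$ into cyclic factors reduces us further to $M=\ZZ/m\ZZ$ with trivial action, and a further inflation-restriction argument with the extension $K(\mu_m)/K$ lets us assume $\mu_m\subseteq K$, at the cost of also adjoining to $S$ the archimedean places and the places above $m$.

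With $\mu_m\subseteq K$ and $M=\ZZ/m\ZZ$ the trivial module, Kummer theory identifies $H^1(G,M)\cong H^1(G,\mu_m)\cong K^*/(K^*)^m$, and under this identification the class of $x\in K^*$ is unramified at a place $v\nmid m$ precisely when $v(x)\equiv 0\pmod m$. Thus $H^1(G,M;S)$ is identified with the group
\[
K(S,m)\ :=\ \{\,[x]\in K^*/(K^*)^m\ :\ m\mid v(x)\ \text{ for all }\ v\notin S\,\},
\]
and the lemma is reduced to the finiteness of $K(S,m)$.

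That finiteness follows from the exact sequence
\[
0\ \to\ \mathcal{O}_{K,S}^*/(\mathcal{O}_{K,S}^*)^m\ \to\ K(S,m)\ \to\ \mathrm{Cl}_S(K)[m]\ \to\ 0,
\]
in which the right-hand map sends $[x]$ to the ideal class of the fractional $S$-ideal $\goa$ determined by $(x)=\goa^m$ (which makes sense because $m\mid v(x)$ for every $v\notin S$). The group of $S$-units $\mathcal{O}_{K,S}^*$ is finitely generated, by Dirichlet's $S$-unit theorem, so its quotient by $m$-th powers is finite; and the $S$-class group $\mathrm{Cl}_S(K)$ is finite, being a quotient of the ideal class group of $K$. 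Therefore $K(S,m)$ is finite, and unwinding the reductions of the first paragraph shows that $H^1(G,M;S)$ is finite.

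The genuinely substantive input is this last step, the finiteness of $K(S,m)$, which is exactly where finiteness of the ideal class group and Dirichlet's unit theorem enter; the rest is a routine, if bookkeeping-heavy, reduction, the only points requiring care being the successive enlargements of $S$ so that the auxiliary extensions $L/K$ and $K(\mu_m)/K$ are unramified outside it, and the finiteness of the cohomology of the finite groups $\Gal(L/K)$ and $\Gal(K(\mu_m)/K)$ with finite coefficients.
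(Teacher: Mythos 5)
Your proof is correct: the paper does not prove this lemma at all but simply cites it as Lemma 4.3 of Chapter X of Silverman's \emph{Arithmetic of Elliptic Curves}, and your argument is precisely the standard one given there — reduce by inflation-restriction (enlarging $S$ as needed) to a trivial cyclic module over a field containing $\mu_m$, identify the unramified classes with the group $K(S,m)\subseteq K^*/(K^*)^m$ via Kummer theory, and deduce finiteness from the finiteness of the $S$-class group and Dirichlet's $S$-unit theorem. Nothing further is needed.
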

\begin{proof} (Proof of the theorem)
Consider the diagram
$$
  \diagram
  H^1(G,A[n])\ar[dd]_-{} \ar[rr]^-{} & & \prod_v H^1(G_v,A_v)[n] \ar[dd]^-{} \\ \\
 H^1(G,A_0(A)[n])\ar[rr]^-{} & & \prod_v H^1(G_v,A_0(A_v))[n]
  \enddiagram
  $$
Suppose
$S^n(A_0(A)/K) \neq \{0\}$. Then by the left vertical isomorphism any such non-trivial element lifts to a unique element in $S^n(A/K)$. The commutativity of the above diagram implies that the image of this lift under the top horizontal arrow is in the kernel of
$$
\prod_v H^1(G_v,A_v)[n]\to \prod_v H^1(G_v,A_0(A_v))[n]\;.
$$
Thus we concentrate on the kernel of the above homomorphism. For that we have the exact sequences:
$$
0\to A(K_v)/nA(K_v)\to H^1(G_v,A_v[n])\to H^1(G_v,A_v)[n]\to 0
$$
and
$$
0\to A_0(A)(K_v)/nA_0(A)(K_v)\to H^1(G_v,A_0(A_v)[n])\to H^1(G_v,A_0(A_v))[n]\to 0
$$
also we have the commutative squares:

$$
  \diagram
  A(K_v)/nA(K_v)\ar[dd]_-{} \ar[rr]^-{} & & H^1(G,A_v[n]) \ar[dd]^-{} \\ \\
  A_0(A)(K_v)/nA_0(A)(K_v) \ar[rr]^-{} & & H^1(G_v,A_0(A_v)[n])
  \enddiagram
  $$

$$
  \diagram
  H^1(G_v,A_v[n])\ar[dd]_-{} \ar[rr]^-{} & & H^1(G_v,A_v)[n] \ar[dd]^-{} \\ \\
 H^1(G_v,A_0(A_v)[n])\ar[rr]^-{} & & H^1(G_v,A_0(A_v))[n]
  \enddiagram
  $$
Suppose  there exists an element $\eta$ such that
 $$
 \eta\in \ker(H^1(G_v,A_v)[n]\to H^1(G_v,A_0(A_v))[n]).
 $$
 Then it means that there exists an element $z\in A_0(A_v)(K_v)$ such that
 $$
 \sigma(\eta)=\sigma.z-z
 $$
 for all $\sigma$ in $G_v$. In particular for all $\sigma$ in the inertia group $I_v$. Now suppose that $v$ is a finite place such that $v\nmid n$ and $A/K$ has good reduction at $v$. Then consider the specialization homomorphism from $A_0(A_v)$ to $A_0(A_v')$, where $A_v'$ is the reduction of $A_v$ at $v$. Then it follows that the image of
 $$\sigma.z-z$$
 is zero  for all $\sigma$ in $I_v$. On the other hand, by the exactness of the above sequence $\sigma.z-z$ is an $n$-torsion, thus by the Roitman's theorem on torsion, the element
 $$
 \sigma.z-z
 $$
corresponds to an $n$-torsion on $A_v$. By similar argument as above, we see that this n-torsion on $A_v$ is mapped to zero under $A_v\to A_v'$. But we know that the $n$-torsion of $A_v$ are embedded in $A_v'$. Therefore we have this $n$-torsion on $A$ is zero and consequently
 $$
 \sigma.z=z
 $$
 for all $\sigma\in I_v$.  Now $\sigma(\eta)$ is in $H^1(G_v,A_0(A_v)[n])$ and it is zero in $H^1(I_v,A_0(A_v)[n])$. Therefore it is zero in $H^1(I_v,A_v[n])$.  Then by the inflation-restriction sequence for group cohomology, the kernel $H^1(G_v,A_v)[n]\to H^1(G_v,A_0(A_v))[n]$ is trivial for all but a finite set of places $v$.  Now consider the image of $H^1(G,A[n])$ in $H^1(G_v,A_v)[n]$. By the previous argument, this image is unramified for all but finitely many places $v$.
 Hence   lemma 3.4
  gives that it is finite.

 Now the image of $H^1(G,A[n])$ in $H^1(G_v,A_v)[n]$ is zero for all but finitely many $v$. So we have obtained that the group $S^n(A_0(A)/K)$ embeds in the finite group, the image of $H^1(G,A[n])$ in
 $$
 \ker(\prod_v H^1(G_v,A_v)[n]\to H^1(G_v,A_0(A_v))[n])\;,
 $$
which  actually is in a finite product
 $$
 \ker(\prod_{v\in S}H^1(G_v,A_v)[n]\to H^1(G_v,A_0(A_v))[n])\;.
 $$
Therefore  $S^n(A_0(A)/K)$ is finite and
hence it follows that
$$
A_0(A)(K)/nA_0(A)(K)
$$ is finite.
\end{proof}
Now we look at the Tate-Shafarevich group.
\begin{theorem}
 $TS(A_0(A)/K)$ admits a map to the direct sum of
$$
TS((\oplus_{i=1}^n J(C_i)/A_n)/K)
$$
where $C_i$'s are finitely many smooth, projective curves in $A$ and $A_n$ is the kernel of the map $\oplus_{i=1}^n J(C_i)\to A$.
\end{theorem}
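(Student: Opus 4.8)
The plan is to use the classical presentation of $A_0(A)$ by Jacobians of curves lying on $A$ and to transport Tate-Shafarevich classes along it. For a smooth projective curve $C\hookrightarrow A$ defined over $K$, push-forward of degree-zero divisor classes gives a $G$-equivariant homomorphism $J(C)=\Pic^0(C)\to A_0(A)$ whose composite with the albanese map $\alb\colon A_0(A)\to A$ is the homomorphism $J(C)\to A$ induced by the inclusion $C\hookrightarrow A$. Summing over a finite collection $C_1,\dots,C_n$ yields $\pi\colon\bigoplus_{i=1}^n J(C_i)\to A_0(A)$ with $\alb\circ\pi$ equal to the sum map $\bigoplus_i J(C_i)\to A$; set $A_n:=\ker\bigl(\bigoplus_i J(C_i)\to A\bigr)$, so that $\bigoplus_i J(C_i)/A_n$ is an abelian variety --- the abelian subvariety of $A$ spanned by $C_1,\dots,C_n$ --- and $\ker\pi\subseteq A_n$. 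Since every finite set of closed points of $A_{\bar K}$ lies on a single smooth projective curve in $A$ (defined over a finite extension of $K$, whose finitely many $G$-conjugates then form a family defined over $K$), the images $\pi\bigl(\bigoplus_i J(C_i)\bigr)$ exhaust $A_0(A)$, and, these subgroups being directed under union, $A_0(A)=\varinjlim\bigl(\bigoplus_i J(C_i)/\ker\pi\bigr)$ as $G$-modules; choosing the $C_i$ so that their images generate $A$ moreover gives $\bigoplus_i J(C_i)/A_n\cong A$.

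Next I would push the chain $\bigoplus_i J(C_i)\xrightarrow{\pi}A_0(A)\xrightarrow{\alb}A$, together with the surjection $\bigoplus_i J(C_i)/\ker\pi\twoheadrightarrow\bigoplus_i J(C_i)/A_n$ and the inclusion $\bigoplus_i J(C_i)/\ker\pi\hookrightarrow A_0(A)$, through Galois cohomology. Each of these $G$-equivariant maps induces maps on $H^1(G,-)$ and, after restriction to the decomposition groups $G_v$, on $H^1(G_v,-)$, and by naturality of restriction all of these are compatible with the localization homomorphisms $H^1(G,-)\to\prod_v H^1(G_v,-)$ whose kernels define the Tate-Shafarevich groups; hence one gets induced homomorphisms on Tate-Shafarevich groups. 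For a family $C_1,\dots,C_n$ whose images generate $A$, the map so obtained, $TS(A_0(A)/K)\to TS\bigl((\bigoplus_i J(C_i)/A_n)/K\bigr)$, is just the albanese-induced map $TS(A_0(A)/K)\to TS(A/K)$ under the identification $\bigoplus_i J(C_i)/A_n\cong A$. Assembling these over the (directed) family of admissible finite collections of curves --- and using that every class of $TS(A_0(A)/K)$ already occurs at a finite level of the colimit $A_0(A)=\varinjlim\bigl(\bigoplus_i J(C_i)/\ker\pi\bigr)$, filtered colimits being exact and so commuting with the kernels cutting out $TS$ on the $H^1(G,-)$ side --- produces the asserted map from $TS(A_0(A)/K)$ to the direct sum of the groups $TS\bigl((\bigoplus_i J(C_i)/A_n)/K\bigr)$.

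I expect the delicate point to be that the localization map involves the infinite product $\prod_v$, which does not commute with the filtered colimit $A_0(A)=\varinjlim\bigl(\bigoplus_i J(C_i)/\ker\pi\bigr)$: a class that is everywhere locally trivial for $A_0(A)$ need not be witnessed, at a finite level, by a class that is locally trivial at all places simultaneously for some $\bigoplus_i J(C_i)/\ker\pi$ or its quotient. I would handle this exactly as in the proof of the finiteness of $S^n(A_0(A)/K)$ above: at a finite place $v\nmid n$ of good reduction, the specialization map on $A_0$ together with Roitman's theorem on torsion forces the relevant local discrepancy to vanish, so that only the finitely many places dividing $n$ or of bad reduction contribute; one then enlarges the finite family of curves to absorb these finitely many remaining local conditions, after which the given class genuinely lands in $TS\bigl((\bigoplus_i J(C_i)/A_n)/K\bigr)$. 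The remaining points --- descending the complete-intersection curves to $K$ by passing to $G$-orbits, checking $\bigoplus_i J(C_i)/A_n$ is an abelian variety, and independence of the construction, in the colimit, from the auxiliary choices --- are routine.
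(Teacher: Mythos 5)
Your proposal follows essentially the same route as the paper: present $A_0(A)$ via the images $B_n=\oplus_{i=1}^n J(C_i)/\ker(\oplus_i j_{i*})$ of finite sums of Jacobians of curves on $A$, identify $H^1(G,A_0(A))$ with the colimit of the $H^1(G,B_n)$, and push Tate--Shafarevich classes through the surjections $B_n\twoheadrightarrow \oplus_{i=1}^n J(C_i)/A_n$ onto the quotient abelian varieties. The one point where you are more careful than the paper --- that the product over all places does not commute with the filtered colimit, so a class everywhere locally trivial in $A_0(A)$ need not be witnessed at a single finite level --- is simply asserted away in the paper (via the claims that $\oplus B_{vn}\to A_0(A_v)$ is an isomorphism and that kernel elements lift uniquely), so your proposed Roitman/specialization patch is a refinement of, not a departure from, the paper's argument.
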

\begin{proof}
The group $A_0(A(\bar K))$ consists of algebraically trivial zero cycles on $A(\bar K)$. Thus given any such cycle $z$ there exists a smooth projective curve $C$ defined over $\bar K$ and two $\bar K$ points $P,Q$ on the curve such that
$$
j_*(P-Q)=z\;,
$$
where $j$ is the closed embedding of $C$ in $A$. Therefore we have the following commutative diagram:

$$
  \diagram
  \oplus_{C\subset A} J(C)\ar[dd]_-{} \ar[rr]^-{} & & A_0(A) \ar[dd]^-{} \\ \\
 \oplus _{C_v} J(C_v)\ar[rr]^-{} & & A_0(A_v)
  \enddiagram
  $$
Here the horizontal arrows are surjective.

Since the group cohomology of a direct sum is the direct sum of group cohomologies we have
$$H^1(G,\oplus J(C))\cong \oplus H^1(G,J(C))$$
and similarly
$$H^1(G_v,\oplus J(C_v))\cong \oplus H^1(G_v,J(C_v))\;.$$
So we get the following commutative diagram by the functoriality of group cohomology
$$
  \diagram
  \oplus_{C\subset A} H^1(G,J(C))\ar[dd]_-{} \ar[rr]^-{} & & H^1(G, A_0(A)) \ar[dd]^-{} \\ \\
 \oplus _{C_v} H^1(G_v,J(C_v))\ar[rr]^-{} & & H^1(G_v,A_0(A_v))
  \enddiagram
  $$
Now suppose  there is a function $\eta:G\to A_0(A)$ which has finite image (as $G$ is profinite).
So  the images $\eta(\sigma_1),\cdots,\eta(\sigma_n)$ are supported on finitely many $J(C_i)$, where $C_i$ is a smooth projective curve. Then
$$
\eta(\sigma\tau)=\eta(\sigma)+\sigma\eta(\tau)
$$
in $A_0(A)$. Hence the above equality happens on the image of $\oplus_{i=1}^n J(C_i)$ inside $A_0(A)$. Therefore we have
$$
\eta(\sigma\tau)=\eta(\sigma)+\sigma\eta(\tau)
$$
on
$$
\oplus_{i=1}^n J(C_i)/ \ker(\oplus_i j_{i*})
$$
where $j_i$ is the closed embedding of $J(C_i)$ into $A$.

Now the kernel of $\oplus_{i=1}^n j_{i*}$ is contained in the kernel of $\oplus _{i=1}^n J(C_i)\to A$. We call  this kernel as $A_{n}$.  Then if we consider the abelian variety $(\oplus_{i=1}^n J(C_i))/A_n$, we have
$$\oplus_{i=1}^nJ(C_i)\to A_0(A)$$
is surjective onto its image  with the kernel contained in $ A_n$ and similar thing happens for $J(C_{vi})$.  Therefore we have the following commutative diagram:
$$
  \diagram
   \oplus H^1(G,(\oplus_{i=1}^n J(C_i))/\ker(\oplus_i j_{i*}))\ar[dd]_-{} \ar[rr]^-{} & & H^1(G, A_0(A)) \ar[dd]^-{} \\ \\
 \oplus H^1(G_v,(\oplus _{i=1}^n J(C_{vi})/\ker(\oplus_i j_{iv*})))\ar[rr]^-{} & & H^1(G_v,A_0(A_v))
  \enddiagram
  $$
The direct sum on the extreme left of the diagram is taken on finite direct sums $\oplus_{i=1}^n J(C_i)/\ker(\oplus_i j_{i*})$. By the previous discussion the group $\oplus_{i=1}^n J(C_i)/\ker(\oplus_i j_{i*})$ admits a homomorphism to  $(\oplus_{i=1}^n J(C_i))/A_n$. Similarly $(\oplus_{i=1}^n J(C_{vi}))/\ker(\oplus_{i=1}^n j_{vi*})$ admits a homomorphism to $(\oplus_{i=1}^n J(C_{vi}))/A_{nv}$. We thus have lifted the function from $G$ to $A_0(A)$ to $G\to B_n=\oplus_{i=1}^n J(C_i)/\ker(\oplus_i j_{i*})$. So  the map
$$
\oplus H^1(G,B_n)\to H^1(G,A_0(A))
$$
is surjective and in fact is an isomorphism (the direct sum is taken on the quotients of finite direct sums: $\oplus_{i=1}^n J(C_i)/\ker(\oplus_i j_{i*})$). Now if there is an  element in the kernel of
$$
H^1(G,A_0(A))\to H^1(G_v,A_0(A_v))
$$
then by the following commutative diagram:
$$
  \diagram
  \oplus H^1(G,B_n)\ar[dd]_-{} \ar[rr]^-{} & & H^1(G, A_0(A)) \ar[dd]^-{} \\ \\
 \oplus  H^1(G_v,B_{vn})\ar[rr]^-{} & & H^1(G_v,A_0(A_v))
  \enddiagram
  $$
it lifts uniquely to an element in the kernel of
$$
\oplus H^1(G,B_n)\to H^1(G_v,A_0(A_v))\;.
$$
Since
$$
\oplus B_{vn}\to A_0(A_v)
$$
is an isomorphism, the image of the lift under the map
$$
\oplus_i H^1(G,B_n)\to \oplus_i H^1(G_v,B_{vn})
$$
is zero. In fact it is supported on one finite direct sum $\oplus_{i=1}^n J(C_i)/\ker(\oplus_i j_{i*})$. Now we look at  the diagram:
$$
  \diagram
   H^1(G,B_n)\ar[dd]_-{} \ar[rr]^-{} & & H^1(G, \oplus_{i=1}^n J(C_i)/A_n) \ar[dd]^-{} \\ \\
 \oplus  H^1(G_v,B_{vn})\ar[rr]^-{} & & H^1(G_v,\oplus_{i=1}^n J(C_{vi})/A_{nv})
  \enddiagram
  $$
By the previous discussion, the image of the element under the map
$$H^1(G,B_n)\to H^1(G,\oplus_{i=1}^n J(C_i)/A_n)$$
is  in the kernel of
$$H^1(G,\oplus_{i=1}^n J(C_i)/A_n)\to H^1(G_v,\oplus_{i=1}^n J(C_{vi})/A_{nv})$$
which is the Tate-Shafarevich group of the abelian variety $\oplus_{i=1}^n J(C_i)/A_n$. So the Tate-Shafarevich group $TS(A_0(A)/K)$ admits a map to
$$\oplus TS(\oplus_{i=1}^n J(C_i)/A_n)\;.$$
Thus the kernel of this map is contained in $H^1(G,A_n/B_n)$.

\end{proof}

\end{document}